\tikzset{
  on each segment/.style={
    decorate,
    decoration={
      show path construction,
      moveto code={},
      lineto code={
        \path [#1]
        (\tikzinputsegmentfirst) -- (\tikzinputsegmentlast);
      },
      curveto code={
        \path [#1] (\tikzinputsegmentfirst)
        .. controls
        (\tikzinputsegmentsupporta) and (\tikzinputsegmentsupportb)
        ..
        (\tikzinputsegmentlast);
      },
      closepath code={
        \path [#1]
        (\tikzinputsegmentfirst) -- (\tikzinputsegmentlast);
      },
    },
  },
  mid arrow/.style={postaction={decorate,decoration={
        markings,
        mark=at position .5 with {\arrow[#1]{stealth}}
      }}},
}
\newcommand{\oarc}[4]{
\draw[thick, postaction={on each segment={mid arrow}}] (#1,#2) ..controls (#1 + .2,#2 + .7) and (#3 - .2,#4 + .7) .. (#3,#4);
}
\def\Z{{\mathbb Z}}
\def\C{{\mathbb C}}
\def\R{{\mathbb R}}
\def\A{{\mathcal A}}
\def\cl{\mathcal}
\def\s{{\sigma}}
\def\t{{\tau}}
\def\k{{\kappa}}
\def\a{\alpha}
\def\ar{\operatorname{ar}}
\def\mr{\operatorname{mr}}
\newcommand*{\smallp}[1]{\scalebox{.75}{\ensuremath#1}}
\newcommand*{\subsmallp}[1]{\scalebox{.5}{\ensuremath#1}}
\newcommand{\pp}[2][p]{\imath_{\hspace*{-1pt}#1}\hspace*{-.5pt}\smallp(#2\smallp)}
\newcommand{\subpp}[2][p]{\imath_{\hspace*{-1pt}#1}\hspace*{-.5pt}\subsmallp(#2\subsmallp)}
\newcommand\id{\operatorname{id}}
\newcommand\Aut{\operatorname{Aut}}
\newcommand\diag{\operatorname{diag}}
\newcommand\perm{\operatorname{perm}}
\newcommand{\comment}[1]{}
\newcommand{\al}[1]{\begin{align*}#1\end{align*}}
\newtheorem{thm}{Theorem}[section]
\newtheorem{lem}[thm]{Lemma}
\newtheorem{prop}[thm]{Proposition}
\newtheorem{cor}[thm]{Corollary}
\newtheorem{conj}[thm]{Conjecture}
\theoremstyle{definition}
\newtheorem{defn}[thm]{Definition}
\newtheorem{rem}[thm]{Remark}
\providecommand\@dotsep{5}
\def\listtodoname{List of Todos}
\def\listoftodos{\@starttoc{tdo}\listtodoname}
\begin{document}

\title{Augmentation Rank of Satellites with Braid Pattern}

\author{Christopher R. Cornwell}
\author{David R. Hemminger}

\begin{abstract}
Given a knot $K$ in $S^3$, a question raised by Cappell and Shaneson asks if the meridional rank of $K$ equals the bridge number of $K$. Using augmentations in knot contact homology we consider the persistence of equality between these two invariants under satellite operations on $K$ with a braid pattern. In particular, we answer the question in the affirmative for a large class of iterated torus knots.
\end{abstract}

\maketitle


\section{Introduction}
Let $K$ be an oriented knot in $S^3$ and denote by $\pi_K$ the fundamental group of its complement $\overline{S^3\setminus n(K)}$, with some basepoint. We call an element of $\pi_K$ a \emph{meridian} if it is represented by the oriented boundary of a disc, embedded in $S^3$, whose interior intersects $K$ positively once. The group $\pi_K$ is generated by meridians; the \emph{meridional rank} of $K$, written $\mr(K)$, is the minimal size of a generating set containing only meridians. 

Choose a height function $h:S^3\to\R$. The \emph{bridge number} of $K$, denoted $b(K)$, is the minimum of the number of local maxima of $h|_{\varphi(S^1)}$ among embeddings $\varphi:S^1\to S^3$ which realize $K$.

By considering Wirtinger's presentation of $\pi_K$ one can show that $\mr(K)\le b(K)$ for any $K\subset S^3$. Whether the bound is equality for all knots is an open question attibuted to Cappell and Shaneson \cite[Prob. 1.11]{Kir95}. Equality is known to hold for some families of knots due to work of various authors (\cite{BZ,Cor13b,RZ}).

Here we study \emph{augmentations} of $K$, which are maps that arise in the study of knot contact homology. To each augmentation is associated a rank and there is a maximal rank of augmentations of a given $K$, called the \emph{augmentation rank} $\ar(K)$. For any $K$ the inequality $\ar(K)\le \mr(K)$ holds (see Section \ref{SecBG_AugRk}). We discuss the behavior of $\ar(K)$ under satellite operations with a braid pattern. 

To be precise, denote the group of braids on $n$ strands by $B_n$ and write $\hat{\beta}$ for the \emph{braid closure} of a braid $\beta$ (see Section \ref{SecBG}, Figure \ref{fig:BClosure}). We write $\imath_{\hspace*{-1pt}n}$ for the identity in $B_n$. 

Throughout the paper we let $\alpha\in B_k$ and $\gamma\in B_p$ and set $K = \hat{\alpha}$. We assume our braid closures are a (connected) knot. Note that $\ar(K)\le k$.

\begin{defn} Let $\pp\alpha$ be the braid in $B_{kp}$ obtained by replacing each strand of $\alpha$ by $p$ parallel copies (in the blackboard framing). Let $\bar\gamma$ be the inclusion of $\gamma$ into $B_{kp}$ by the map $\s_i\mapsto\s_i, 1\le i\le p-1$. Set $\gamma\smallp(\alpha\smallp) = \pp\alpha\bar\gamma$. The \emph{braid satellite} of $K$ associated to $\alpha, \gamma$ is defined as $K(\alpha,\gamma) = \widehat{\gamma\smallp(\alpha\smallp)}$.
\label{defn:BraidSat}
\end{defn}

\begin{figure}[ht]
\begin{tikzpicture}[scale=.7,>=stealth]
    \draw (0-.5,-1) rectangle (3-.5,2);
    \foreach \y in {1.5,0.5,-0.5}
      \draw[->]
          (-0.5-.5,\y) -- (0-.5,\y)
          (3-.5,\y) -- (4-.5,\y);
    \draw (1.5-.5,.5) node {$\alpha$};
    \draw (6,-1) rectangle (9,2);
    \foreach \y in {1.5,0.5,-0.5}
      \foreach \p in {0.15,0.05,-0.05,-0.15}
        \draw
            (5.5,\y+\p) -- (6,\y+\p)
            (9,\y+\p) -- (9.5,\y+\p);
    \draw (7.5,.5) node {$\pp[4]{\alpha}$};
    \draw (9.5,1.25) rectangle (10,1.75);
    \foreach \y in {0.5,-0.5}
      \foreach \p in {0.15,0.05,-0.05,-0.15}
        \draw[->] (9.5,\y+\p) -- (10.25,\y+\p);
      \foreach \p in {0.15,0.05,-0.05,-0.15}
        \draw[->] (10,1.5+\p) -- (10.25,1.5+\p);
    \draw (9.75,1.5) node {\small $\gamma$};
    \draw[dashed,line width=1.5pt,->] 
      (3.25,2.2)  ..controls (3.6,2.8) and (5.05,2.8) ..(5.4,2.2);
  \end{tikzpicture}
  \caption{Constructing $\gamma(\alpha)$ from $\alpha$; case $p=4$.}
  \label{FigBraidSat}
  \end{figure}
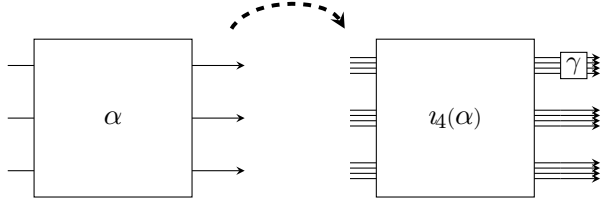

As defined $K(\alpha,\gamma)$ depends on the choice of $\alpha$. However, the construction is more intrinsic if we require the index $k$ of $\alpha$ to be minimal among braid representatives of $K$ (see Section \ref{SecAsSatelliteOp}). 

Note that if $\hat{\alpha}$ and $\hat{\gamma}$ are each a knot, $K(\alpha,\gamma)$ is also. Our principal result is the following.

\begin{thm}\label{main}
If $\alpha\in B_k$ and $\gamma\in B_p$ are such that $\ar(\hat{\alpha})=k$ and $\ar(\hat{\gamma})=p$, then $\ar(K(\alpha,\gamma))=kp$.
\end{thm}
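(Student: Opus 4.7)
The plan is to prove the two matching bounds $\ar(K(\alpha,\gamma)) \le kp$ and $\ar(K(\alpha,\gamma)) \ge kp$. The upper bound is immediate: because the braid $\pp\alpha \bar\gamma$ lies in $B_{kp}$, the knot $K(\alpha,\gamma)$ has a braid representative on $kp$ strands, and applying the general observation $\ar(K)\le k$ (noted just before Definition \ref{defn:BraidSat}) to this representative gives $\ar(K(\alpha,\gamma)) \le kp$.

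For the lower bound I would construct a rank-$kp$ augmentation of $K(\alpha,\gamma)$ out of the hypothesized rank-$k$ augmentation $\epsilon_\alpha$ of $\hat\alpha$ and rank-$p$ augmentation $\epsilon_\gamma$ of $\hat\gamma$. The natural structure to exploit is the decomposition $\pp\alpha\, \bar\gamma$: the $kp$ strands fall into $k$ bundles of $p$ strands each, and the two factors of the braid word act in complementary ways. The cabling factor $\pp\alpha$ permutes whole bundles exactly as $\alpha$ permutes the $k$ strands and never exchanges two strands within the same bundle, while the pattern factor $\bar\gamma$ lives entirely in the first bundle and reproduces $\gamma$ there. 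I would assign to the meridian labelled $(i,j)$ --- the $j$-th strand of the $i$-th bundle --- a value $c_{ij}$ so that for each fixed $j$ the tuple $(c_{1j},\ldots,c_{kj})$ realizes the augmentation $\epsilon_\alpha$ and for each fixed $i$ the tuple $(c_{i1},\ldots,c_{ip})$ realizes $\epsilon_\gamma$ (after any shifts needed to accommodate the blackboard framing inside $\pp\alpha$). Passing to a sufficiently generic coefficient extension allows the $kp$ values $c_{ij}$ to be distinct, which is what promotes the construction from a plain augmentation to one of full rank $kp$.

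Verifying that this assignment extends to an augmentation of the knot DGA of $K(\alpha,\gamma)$ is the crux. The relations introduced by $\pp\alpha$ should be blockwise copies of those from $\alpha$, because the $p$ strands of each bundle travel through $\pp\alpha$ together and with no internal crossings, so the $\alpha$-relations are inherited column-by-column from $\epsilon_\alpha$; the relations from $\bar\gamma$ live entirely in the first bundle and should reduce to the $\gamma$-relations satisfied by $\epsilon_\gamma$. The principal obstacle I foresee is the bookkeeping at the interface of the two factors: the blackboard framing built into the cabling operation introduces twists that must be tracked carefully across the passage from $\alpha$ to $\pp\alpha$, and compatibility of $\epsilon_\alpha$ with these twists will have to be built into the choice of the values $c_{ij}$. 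Once the candidate augmentation is shown to respect every DGA relation and to take $kp$ distinct values on meridians, the lower bound $\ar(K(\alpha,\gamma)) \ge kp$ follows from the definition of augmentation rank, completing the proof.
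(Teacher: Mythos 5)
Your upper bound is fine, and at the broadest level your plan for the lower bound (combine the given augmentations of $\hat\alpha$ and $\hat\gamma$ into one of the satellite, exploiting the bundle structure of the braid) is the strategy the paper follows. But there are two genuine gaps in the execution. First, your criterion for achieving rank $kp$ is not correct: an augmentation does not assign a scalar to each strand/meridian, it assigns values to the doubly-indexed generators $a_{IJ}$, $1\le I\ne J\le kp$ (plus $\lambda,\mu$), and its rank is the rank of the matrix $\epsilon(\mathbf A)$. Making $kp$ per-strand values ``distinct'' by a generic coefficient extension neither defines an augmentation nor forces $\epsilon(\mathbf A)$ to have full rank. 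The tool that actually certifies full rank is Theorem \ref{thm:RanknAugs}: a homomorphism $\epsilon:\A_{kp}\to\C$ extends to a rank-$kp$ augmentation of the closure of a $kp$-braid $\beta$ if and only if $\epsilon(\Phi_\beta^L)=\Delta(\beta)=\epsilon(\Phi_\beta^R)$, and the hypotheses $\ar(\hat\alpha)=k$, $\ar(\hat\gamma)=p$ are used precisely through this characterization applied to $\alpha$ and $\gamma$. Without some such criterion your argument has no mechanism for converting ``the relations are satisfied'' into ``the rank is $kp$.''

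Second, the step you call bookkeeping is the actual content of the theorem. The correct candidate is not an array $c_{ij}$ with prescribed rows and columns; it is $\epsilon=\pi\circ(\epsilon_k\otimes\delta)\circ\psi$, where $\psi:\A_{kp}\to\A_k\otimes\A_p$ sends $a_{IJ}$ to $a_{q_Iq_J}\otimes a_{r_Ir_J}$, to $1\otimes a_{r_Ir_J}$, to $a_{q_Iq_J}\otimes 1$, or to $0$ according to how the bundle indices and intra-bundle indices of $I,J$ compare --- note in particular that some generators must be sent to zero --- and $\delta$ is a sign-twisted copy of $\epsilon_p$ whose signs depend on the parities of $w(\alpha)$ and $p$. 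Showing that this works requires Proposition \ref{psiofbp}, i.e.\ $\psi(\Phi_{\subpp\alpha}^L)=\Phi_\alpha^L\otimes I_p$, which rests on the commutation Lemma \ref{commutes} and the explicit spanning-arc computations of Lemmas \ref{Sigma_n} and \ref{SimplifiedImages}; the Chain Rule then handles the $\bar\gamma$ factor. The framing/parity issue you flag is real and is resolved by the sign function $g$ in the definition of $\delta$ together with Lemma \ref{lem:monomial}, but none of this is supplied or correctly set up in your sketch, so the lower bound is not established.
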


A corollary of Theorem \ref{main} involves Cappell and Shaneson's question for iterated torus knots. Let ${\bf p}=(p_1,\ldots,p_n)$ and ${\bf q}=(q_1,\ldots,q_n)$ be integral vectors with $p_i >0$ for all $1\le i \le n$. We write $T({\bf p},{\bf q})$ for the $({\bf p},{\bf q})$ \emph{iterated torus knot}, defined as follows. 

By convention take $T(\emptyset,\emptyset)$ as the unknot, then define $T({\bf p},{\bf q})$ inductively. Let $\hat{\bf p}, \hat{\bf q}$ be the truncated lists obtained from ${\bf p},{\bf q}$ by removing the last integer in each. If $\alpha$ is a braid of minimal index such that $T(\hat{\bf p},\hat{\bf q}) = \hat{\alpha}$ then define $T({\bf p},{\bf q}) = K(\alpha,(\s_1\ldots\s_{p_n-1})^{q_n})$. 

We remark that $T({\bf p},{\bf q})$ is a cable of $T(\hat{\bf p},\hat{\bf q})$, but not the $(p_n,q_n)$-cable in the traditional Seifert framing.

\begin{cor}\label{cor:iteratedCables}
Given integral vectors ${\bf p}$ and ${\bf q}$, suppose that $|p_i|<|q_i|$ and $\text{gcd}(p_i,q_i)=1$ for each $1\le i\le n$. Then 
      \[\ar(T({\bf p},{\bf q}))=\mr(T({\bf p},{\bf q})) = b(T({\bf p},{\bf q}))=p_1p_2\ldots p_n.\]
\end{cor}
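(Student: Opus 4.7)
The plan is to combine the basic chain $\ar(K)\le \mr(K)\le b(K)$ (valid for every knot) with Theorem \ref{main} applied iteratively, reducing the corollary to establishing the two ``endpoint'' bounds $\ar(T({\bf p},{\bf q}))\ge p_1\cdots p_n$ and $b(T({\bf p},{\bf q}))\le p_1\cdots p_n$.

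The upper bound on $b$ is immediate from the construction: unfolding Definition \ref{defn:BraidSat} recursively, $T({\bf p},{\bf q})$ is presented as the closure of a braid on $p_1\cdots p_n$ strands, because each satellite step replaces every strand by $p_i$ parallel copies and then inserts $\bar\gamma \in B_{p_i}$. The closure of any $k$-strand braid admits a bridge presentation with at most $k$ maxima, so $b(T({\bf p},{\bf q}))\le p_1\cdots p_n$.

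For the lower bound on $\ar$, I would induct on $n$, with base case $n=1$. The base case asserts $\ar(T(p,q))=p$ when $0<p<|q|$ and $\gcd(p,q)=1$; I would invoke this as a known fact about torus knots (cf.\ \cite{Cor13b}), or, failing that, reduce it to exhibiting a rank-$p$ augmentation of the knot contact homology DGA of $T(p,q)$ and matching it against the automatic bound $\ar(T(p,q))\le p$ coming from the standard $p$-strand braid representative. For the inductive step, suppose $\ar(T(\hat{\bf p},\hat{\bf q}))=p_1\cdots p_{n-1}$, and let $\alpha\in B_k$ be a minimal-index braid representative of $T(\hat{\bf p},\hat{\bf q})$. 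The iterated construction furnishes a braid on $p_1\cdots p_{n-1}$ strands representing this knot, so $k\le p_1\cdots p_{n-1}$; conversely $k\ge \ar(\hat\alpha)=p_1\cdots p_{n-1}$ by the inductive hypothesis and the general inequality $\ar(\hat\alpha)\le k$. Hence $k=p_1\cdots p_{n-1}$ and $\ar(\hat\alpha)=k$. Setting $\gamma=(\s_1\cdots\s_{p_n-1})^{q_n}\in B_{p_n}$, the closure $\hat\gamma$ is the torus knot $T(p_n,q_n)$, so the base case gives $\ar(\hat\gamma)=p_n$. Theorem \ref{main} then applies and yields
\[\ar(T({\bf p},{\bf q}))=\ar(K(\alpha,\gamma))=kp_n=p_1\cdots p_n,\]
completing the induction. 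Combining with $\ar\le \mr\le b\le p_1\cdots p_n$ forces all four quantities to coincide.

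The step I expect to require the most care is the base case $\ar(T(p,q))=p$; everything else is either given (Theorem \ref{main} and the comparisons $\ar\le\mr\le b$) or formal bookkeeping about braid indices and the satellite construction. In particular, pinning down the base case precisely is the main obstacle, since once it is in hand the inductive argument is a direct application of Theorem \ref{main}, with the only subtlety being the verification that the braid index of $T(\hat{\bf p},\hat{\bf q})$ equals $p_1\cdots p_{n-1}$ so that the minimal-index hypothesis of Definition \ref{defn:BraidSat} aligns with the hypothesis $\ar(\hat\alpha)=k$ of Theorem \ref{main}.
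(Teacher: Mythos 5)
Your proposal is correct and follows essentially the same route as the paper: induct on $n$, use Theorem 1.3 of \cite{Cor13b} for the base case $\ar(T(p_1,q_1))=p_1$ (and for $\ar(\hat\gamma)=p_n$ at each stage), apply Theorem \ref{main} for the inductive step, and close the loop with $\ar\le\mr\le b\le p_1\cdots p_n$. Your extra verification that the minimal braid index of $T(\hat{\bf p},\hat{\bf q})$ equals $p_1\cdots p_{n-1}$ (sandwiching it between $\ar(\hat\alpha)$ and the strand count of the constructed braid) is a detail the paper leaves implicit, and it is a worthwhile addition.
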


The assumption $|p_i|<|q_i|$ is needed for the hypothesis of Theorem \ref{main}, that the associated braids have closures with augmentation rank equal to the braid index. This requirement is not a deficiency of our techniques; there are cables of $(n,n+1)$ torus knots which do not attain the large augmentation rank in Corollary \ref{cor:iteratedCables}.

\begin{thm}\label{ThmNNPlus1}Given $p>1$ and $n>1$, $\ar(T((n,p),(n+1,1))) < np$.
\end{thm}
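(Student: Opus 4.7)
By the definition of the iterated torus knots, $T((n,p),(n+1,1)) = K(\alpha, \gamma)$ where $\alpha = (\s_1\cdots\s_{n-1})^{n+1}\in B_n$ is the standard braid representing $T_{n,n+1}$ and $\gamma = \s_1\cdots\s_{p-1} \in B_p$. The critical observation is that $\gamma$, the positive $p$-cycle, has closure $\hat\gamma$ equal to the unknot, so $\ar(\hat\gamma) = 1 < p$. This is exactly the case in which the pattern hypothesis of Theorem~\ref{main} is violated, and our task is to show the conclusion of that theorem also fails strictly.

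The plan is to argue by contradiction: suppose $\ar(K(\alpha,\gamma)) = np$, so there exists an augmentation $\epsilon$ of the braid $\bar\alpha\bar\gamma \in B_{np}$ assigning $np$ pairwise distinct values to the $np$ meridians. Organize the strands into $n$ blocks of $p$ parallel copies so that $\bar\alpha$ permutes blocks via the $n$-cycle of $\alpha$ and $\bar\gamma$ acts on the $p$ strands currently occupying the first block. Write $\lambda_{i,a}$ for the value assigned to the $a$-th strand of the $i$-th block.

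The central claim is a \emph{localization of the augmentation}: the values within a single block, together with the relations contributed by $\bar\gamma$, constitute an augmentation of the pattern braid $\gamma$ on its $p$ strands and hence of $\hat\gamma$. Once established, distinctness of the $\lambda_{i,a}$'s forces this restricted augmentation to have rank $p$, contradicting $\ar(\hat\gamma)=1$ and proving $\ar(K(\alpha,\gamma))<np$.

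The principal obstacle is making the localization rigorous, since the companion braid $\bar\alpha$ couples strand values across blocks and therefore the $\bar\gamma$-contribution cannot be read off naively. I would attempt this via the cord algebra or DGA description of augmentations developed elsewhere in the paper: cords (or DGA generators) localized in a neighborhood of the top of the braid depend only on the values in a single block, and their defining relations in the DGA of $K(\alpha,\gamma)$ should coincide with the relations in the DGA of $\hat\gamma$. This identification would amount to running, in reverse, the construction used in the proof of Theorem~\ref{main}, which assembles rank-$np$ augmentations of $K(\alpha,\gamma)$ from rank-$n$ data on $\hat\alpha$ and rank-$p$ data on $\hat\gamma$; the forward direction assumes rank-$p$ data on $\hat\gamma$ exist, while the present theorem exploits the fact that they do not.
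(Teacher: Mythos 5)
There is a genuine gap here, and it is not one of rigor but of strategy: the central ``localization'' claim you propose cannot be true. If one could restrict a rank-$np$ augmentation of $K(\alpha,\gamma)$ to a rank-$p$ augmentation of $\hat\gamma$, this would show that $\ar(K(\alpha,\gamma))=kp$ forces $\ar(\hat\gamma)=p$ for \emph{every} companion braid $\alpha\in B_k$. The paper itself exhibits a counterexample in Section \ref{SecMultiplic}: for $\alpha=\s_1^5\in B_2$ and $\gamma=\s_1\in B_2$ one has $\ar(K(\s_1^5,\s_1))=4=kp$ even though $\hat\gamma$ is the unknot and $\ar(\hat\gamma)=1$. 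So no argument that only uses ``$\hat\gamma$ is an unknot pattern'' can prove Theorem \ref{ThmNNPlus1}; the specific companion $\alpha=(\s_1\cdots\s_{n-1})^{n+1}$ must enter in an essential way, and your plan never uses it. (This is also why the paper is careful to state that the failure of multiplicativity for $(n,n+1)$-torus companions is a genuine phenomenon and not merely a limitation of Theorem \ref{main}.) A secondary problem is the premise that a rank-$np$ augmentation ``assigns $np$ pairwise distinct values to the $np$ meridians'': the rank of an augmentation is by definition the rank of the matrix $\epsilon({\bf A})$, equivalently the dimension of an associated KCH irrep $\pi_K\to\GL_{np}\C$; all meridians of a knot are conjugate and receive conjugate (in fact, for a KCH representation, similar) images, so no distinctness of scalar ``values'' is available to feed into a rank count for the restricted data.

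For comparison, the paper's proof works entirely at the level of the matrices $\Phi^L$ via Theorem \ref{thm:RanknAugs}: a rank-$np$ augmentation would have to satisfy $\epsilon(\Phi^L_{\gamma\subsmallp(\alpha\subsmallp)})=\Delta(\gamma\smallp(\alpha\smallp))$. Since $\bar\gamma$ only involves the first $p$ strands, the rows $p<i\le np$ of $\Phi^L_{\gamma\subsmallp(\alpha\subsmallp)}$ agree with those of $\Phi^L_{\subpp\alpha}$, and the chain rule together with the explicit block structure of $\Phi^L_{\subpp\tau^z}$ for $\tau=\s_1\cdots\s_{n-1}$ is used to show that these constraints force $\epsilon((\Phi^L_{\subpp\alpha})_{1p})=-\epsilon(a_{p+1,p})=0$, whereas $(\Phi^L_{\bar\gamma})_p={\bf e}_1$ places this entry on the diagonal of $\Phi^L_{\gamma\subsmallp(\alpha\subsmallp)}$, where $\epsilon$ must take the value $\pm1$. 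That contradiction is where the arithmetic of the $(n,n+1)$-torus braid actually gets used, and it has no analogue in your outline.
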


It is natural to wonder if the augmentation rank is multiplicative under weaker assumptions on $\alpha, \gamma$ than those in Theorem \ref{main}. The following is a possible generalization.

\begin{conj}Suppose $K=\hat{\alpha}$ for $\alpha\in B_k$, and that $\alpha$ has minimal index among braids with the same closure. Let $\gamma\in B_p$. Then $\ar(K(\alpha,\gamma)) \ge \ar(\hat{\alpha})\ar(\hat{\gamma})$.
\label{ConjSuperMultipl}
\end{conj}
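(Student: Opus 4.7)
The plan is to mimic the construction in the proof of Theorem \ref{main} but to track more carefully how many distinct meridional values an augmentation of $K(\alpha,\gamma)$ realizes when the input augmentations are not of maximal (braid-index) rank. Let $r_\alpha=\ar(\hat{\alpha})$ and $r_\gamma=\ar(\hat{\gamma})$, and fix augmentations $\varepsilon_\alpha$ of $\hat{\alpha}$ and $\varepsilon_\gamma$ of $\hat{\gamma}$ realizing these ranks. The $kp$ strands of $\gamma\smallp(\alpha\smallp)=\pp\alpha\bar\gamma$ partition into $k$ \emph{packets} of $p$ blackboard-parallel strands (one packet per strand of $\alpha$), and the natural proposal is to color each satellite strand by the pair $(c_\alpha,c_\gamma)$, where $c_\alpha$ is the $\varepsilon_\alpha$-color of the parent $\alpha$-strand and $c_\gamma$ is the $\varepsilon_\gamma$-color of its position within the packet.

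The key steps would then be: (i) show that this pairwise coloring induces a well-defined augmentation $\varepsilon$ of the knot contact homology (equivalently, the cord algebra) of $K(\alpha,\gamma)$; (ii) verify that $\varepsilon$ takes exactly $r_\alpha r_\gamma$ distinct values on meridians; (iii) conclude via $\ar(K(\alpha,\gamma))\ge\mathrm{rank}(\varepsilon)$. The minimality hypothesis on $\alpha$ is invoked so that $K(\alpha,\gamma)$ agrees with the intended canonical satellite from the discussion after Definition \ref{defn:BraidSat}; two minimal-index braid representatives of $\hat{\alpha}$ should yield the same satellite, so the combination is independent of which such $\alpha$ is chosen.

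The main obstacle is step (i). When $r_\alpha<k$, several packets share the same $\varepsilon_\alpha$-color, and the braid relations coming from $\pp\alpha$ then mix strands of that color across different packets; one must check that these cross-packet identifications remain compatible with the within-packet $\varepsilon_\gamma$-colors. In Theorem \ref{main} this issue is absent because the $\alpha$-color assignment is a bijection onto packets, so the relations from $\pp\alpha$ separate from those of $\bar\gamma$. In general, overcoming this likely requires either a richer construction of the combined augmentation than the naive product, or an argument showing one can always deform $\varepsilon_\alpha$ within its rank class to trivialize the cross-packet coherence condition. I would expect this rigidity/deformation step, rather than the rank count in (ii) or the well-posedness in (iii), to be the technical heart of the conjecture.
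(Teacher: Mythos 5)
This statement is Conjecture \ref{ConjSuperMultipl}: the paper does not prove it, and explicitly leaves it open. The closest the paper comes is Proposition \ref{ThmCompanionRank}, which lifts a KCH irrep of $\pi_K$ to $\pi_{K(\alpha,\gamma)}$ and yields only $\ar(K(\alpha,\gamma))\ge\ar(\hat\alpha)$ (and, separately, $\ge\ar(P)$); the multiplicative lower bound is supported only by one computer-verified example ($\ar(K(\s_1^5,\s_1))=4$). So there is no proof in the paper to compare yours against, and your proposal is not a proof either: your step (i) is precisely the unresolved heart of the conjecture, and you acknowledge as much. When $r_\alpha<k$ the naive tensor/product coloring does not satisfy the relations coming from $\pp\alpha$, and neither a ``richer construction'' nor a ``deformation within the rank class'' is supplied; asserting that one of these ``likely'' works is a plan, not an argument.

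There is a second, independent gap in step (ii). The rank of an augmentation is defined (Definition \ref{defn:AugRk}) as the rank of the matrix $\epsilon({\bf A})$, equivalently the dimension of an associated KCH irrep --- it is \emph{not} the number of distinct values $\epsilon$ takes on meridians. The only rank-detection tool the paper provides is Theorem \ref{thm:RanknAugs}, which characterizes augmentations of rank equal to the braid index via $\epsilon(\Phi_\beta^L)=\Delta(\beta)=\epsilon(\Phi_\beta^R)$; this is exactly why Theorem \ref{main} needs the hypotheses $\ar(\hat\alpha)=k$ and $\ar(\hat\gamma)=p$. For intermediate ranks no analogous criterion is available, so even granting your coloring, certifying that the resulting augmentation has rank $r_\alpha r_\gamma$ would require machinery not present in the paper or in your sketch. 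Finally, note that the paper's own evidence shows the situation is delicate: Theorem \ref{ThmNNPlus1} exhibits satellites where $\ar(K(\alpha,\gamma))<\ar(K)\ar(P)$ for the framed pattern $P$, so any correct argument must be sensitive to exactly which product of ranks is being bounded below.
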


\begin{rem}There are examples when the inequality of Conjecture \ref{ConjSuperMultipl} is strict (see Section \ref{SecComments}).
\label{RemStrictlySuper}
\end{rem}

The paper is organized as follows. Section \ref{SecAsSatelliteOp} relates braid satellites to existing conventions on satellite operators. In Section \ref{SecBG} we give the needed background in knot contact homology, specifically Ng's cord algebra, and discuss augmentation rank and the relationship to meridional rank. Section \ref{SecBG_AugExist} reviews techniques used in the proof of Theorem \ref{main}. Section \ref{SecMain} is devoted to the proof of Theorem \ref{main}, its requisite supporting lemmas, and Corollary \ref{cor:iteratedCables}. Finally, Section \ref{SecComments} considers the sharpness of our results. We prove Theorem \ref{ThmNNPlus1} and briefly discuss the more general case, Conjecture \ref{ConjSuperMultipl}.


\section*{Acknowledgements}
The first author was supported in part by an AMS-Simons travel grant and is very grateful for this program. The second author was supported in part by a grant from the PRUV Fellowship program at Duke University, and thanks David Kraines and the Duke Math Department for organizing the PRUV program. Both authors would like to thank Lenhard Ng for his consultation and helpful comments.


\section{Satellite operators and the braid satellite}
\label{SecAsSatelliteOp}

Definition \ref{defn:BraidSat} of the braid satellite $K(\alpha,\gamma)$ produces a satellite of $\hat{\alpha}$. As defined, the resulting satellite depends on the braid representative of $\hat{\alpha}$. We remark here how to avoid this ambiguity.

A tubular neighborhood of an oriented knot $K$ has a standard identification with $S^1\times D^2$ determined by an oriented Seifert surface that $K$ bounds. Given a knot $P\subset S^1\times D^2$, as per the usual convention, let $P(K)$ be the satellite of $K$ with pattern $P$ obtained with this framing.

\begin{prop}\label{PropAsSatelliteOp}Given a knot $K$ and a braid $\gamma\in B_p$, let $\omega$ be the writhe of some minimal index closed braid representing $K$. Let $P\subset S^1\times D^2$ be the braid closure of $\Delta^{2\omega}\gamma$, where $\Delta^2$ is the full twist in $B_p$. Then $K(\alpha,\gamma) = P(K)$ for any minimal index braid $\alpha$ with $K=\hat{\alpha}$.
\end{prop}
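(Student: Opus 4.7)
The plan is to realize both $K(\alpha,\gamma)$ and $P(K)$ as satellite knots of $K$ built from the same pattern, expressed in two different trivializations of a tubular neighborhood of $K$. Fix a minimal index closed braid $\alpha\in B_k$ with $\hat\alpha=K$. Thickening the planar braid diagram of $\alpha$ gives a tubular neighborhood $V\subset S^3$ of $K$ together with a natural identification $V\cong S^1\times D^2$ whose preferred longitude is the push-off of $K$ perpendicular to the projection plane. Call this the \emph{blackboard trivialization}; its longitude $\lambda_{\mathrm{bb}}$ satisfies $\mathrm{lk}(K,\lambda_{\mathrm{bb}})=\omega$, where $\omega$ is the writhe of $\alpha$.

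First I would identify $K(\alpha,\gamma)$ as a satellite expressed in the blackboard trivialization. Replacing each strand of $\alpha$ by $p$ parallel copies in the blackboard framing yields $\pp\alpha\in B_{kp}$, whose closure consists of $p$ parallel copies of $K$ lying inside $V$. Multiplying on the right by $\bar\gamma$ before closing has the effect of locally braiding $p$ of these parallel strands by $\gamma$ inside a small cylindrical window along $V$. A short diagrammatic check then shows that $\widehat{\pp\alpha\bar\gamma}$ is precisely the knot obtained by inserting the pattern $\hat\gamma\subset S^1\times D^2$ into $V$ via the blackboard identification.

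Next I would convert this description into the Seifert-framed one used to define $P(K)$. Since $\lambda_{\mathrm{bb}}=\lambda_S+\omega\mu$ in $H_1(\partial V)$, passing from the blackboard trivialization of $V$ to the Seifert trivialization amounts to applying $\omega$ full twists to the disc fibers of $S^1\times D^2$. A braid $\gamma$ placed in $V$ via the blackboard trivialization therefore appears, in the Seifert trivialization, as the braid $\Delta^{2\omega}\gamma$ (here $\Delta^2$ is the meridional full twist in $B_p$, and being central, the order of multiplication is immaterial). This yields $K(\alpha,\gamma)=P(K)$ with $P=\widehat{\Delta^{2\omega}\gamma}$, for the chosen $\alpha$.

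The remaining point is that the formula for $P$ uses the writhe $\omega$ of one specific minimal index braid; to conclude for every minimal index representative $\alpha$ of $K$, one needs $\omega$ to depend only on $K$, not on the chosen representative. This is exactly the content of the Jones conjecture, now a theorem of Dynnikov and Prasolov, so it can be invoked directly. The main obstacle in writing out a fully rigorous proof is the diagrammatic claim in the first step: one must check carefully that the algebraic operation $\pp\alpha\cdot\bar\gamma$ corresponds to the topological operation of local pattern insertion inside $V$, which is standard but warrants an explicit picture. Everything else reduces to bookkeeping with framings and braid relations.
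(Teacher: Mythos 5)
Your proposal is correct and follows essentially the same route as the paper: the paper's proof likewise rests on the two observations that the Jones conjecture (Dynnikov--Prasolov, LaFountain--Menasco) forces every minimal index braid representative of $K$ to have writhe $\omega$, and that the blackboard framing of the $p$-parallel copies differs from the Seifert framing by $\omega$ full twists, so that the closure of $\pp\alpha\bar\Delta^{-2\omega}$ is the $(p,0)$-cable of $K$. Your write-up simply makes the framing bookkeeping more explicit than the paper's one-line argument.
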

\begin{proof}The principal observation is that, since the Jones conjecture holds \cite{DP12,LM12}, the writhe of $\alpha$ must be $\omega$. Thus the blackboard framing of the closure of $\pp\alpha\bar\Delta^{-2\omega}$ agrees with the $(p,0)$-cable of $K$ (with Seifert framing).
\end{proof}

We note, the satellite $T({\bf p},{\bf q})$ corresponds to the $(p_n,p_n\omega_n+q_n)$-cable of $T(\hat{\bf p},\hat{\bf q})$, where $\omega_n$ is defined inductively by $\omega_n=p_{n-1}\omega_{n-1}+(p_{n-1}-1)q_{n-1}$ and $\omega_1=0$.

Concerning the bridge number of $K(\alpha,\gamma)$, a result of Schubert \cite{Schub} (see \cite{Schul} also) states that if $K$ is not the unknot and $P(K)$ is a satellite such that $P$ has winding number $p$, then $b(P(K)) \ge p\ b(K)$. Since $K(\alpha,\gamma) = \widehat{\gamma\smallp(\alpha\smallp)}$, it has bridge number at most $kp$ and thus $b(K(\alpha,\gamma)) = kp$ whenever $b(\hat\alpha) = k$. From this we see $b(T({\bf p},{\bf q})) = p_1p_2\cdots p_n$, provided $p_1 < q_1$.


\section{Background}
\label{SecBG}

  We review in Section \ref{SecBG_KCHdef} the construction of $HC_0(K)$ from the viewpoint of the combinatorial knot DGA, which was first defined in \cite{Ng08}; our conventions are those given in \cite{Ng12}. In Section \ref{SecBG_AugRk} we discuss augmentations in knot contact homology and their rank, which gives a lower bound on the meridional rank of the knot group. Section \ref{SecBG_AugExist} contains a discussion of techniques from \cite{Cor13a} that we use to calculate the augmentation rank.

  Throughout the paper we orient $n$-braids in $B_n$ from left to right, labeling the strands $1,\ldots, n$, with 1 the topmost and $n$ the bottommost strand. We work with Artin's generators $\{\sigma_i^{\pm}$, $i=1,\ldots,n-1\}$ of $B_n$, where in $\s_i$ only the $i$ and $i+1$ strands interact, and they cross once in the manner depicted in Figure \ref{fig:BraidGens}.
      \begin{figure}[ht]
        \begin{tikzpicture}[scale=0.4,>=stealth]
        \draw[->] (0,2) -- node[at start,left]{$i$} (3,0);
        \draw[draw=white,very thick,double=black,->] (0,0) -- node[at start,left]{$i+1$} (3,2);
        \draw (1.5,-0.5) node[below] {$\sigma_i^{-1}$};

        \draw[->] (6,0) -- (9,2);
        \draw[draw=white,very thick,double=black,->] (6,2) -- (9,0);
        \draw (7.5,-1) node[below] {$\sigma_i$};
        \end{tikzpicture}
        \caption{Generators of $B_n$}
        \label{fig:BraidGens}
      \end{figure}
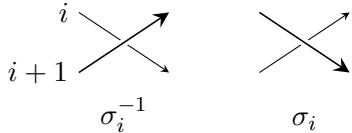
    Given a braid $\beta\in B_n$, the braid closure $\hat{\beta}$ of $\beta$ is the link obtained as shown in Figure \ref{fig:BClosure}. The \emph{writhe} (or algebraic length) of $\beta$, denoted $\omega(\beta)$, is the sum of exponents of the Artin generators in a word representing $\beta$.

    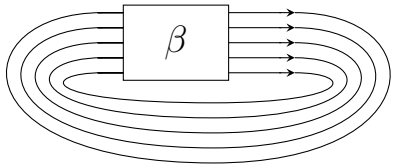
\begin{figure}[ht]
      \begin{tikzpicture}[scale=0.4,>=stealth]
        \draw(0,-.25)--(0,2.25)--(3.5,2.25)--(3.5,-.25)--cycle;
        \draw(1.75,1) node {{\Large $\beta$}};
        \foreach \y in {0,.5,1,1.5,2}
        \foreach \y in {0,.5,1,1.5,2}
          \draw(-.85,\y) -- (0,\y);
        \foreach \y in {0,.5,1,1.5,2} 
          \draw[thin] (3.5,\y) -- (5.25,\y);
        \foreach \y in {0,.5,1,1.5,2} 
          \draw[->] (5.18,\y) -- (5.7,\y);
        \foreach \y in {0,.5,1,1.5,2}
          \draw (5.7,\y) ..controls (7.7+\y*1.3,\y) and (7.7+\y*1.3,-\y-1) ..(3,-\y-1)
                  ..controls (-3-\y*1.3,-\y-1) and (-2.7-\y*1.3,\y)..(-.7,\y);
      \end{tikzpicture}
      \caption{The braid closure of $\beta$}
      \label{fig:BClosure}
    \end{figure}

\subsection{Knot contact homology}
\label{SecBG_KCHdef}

  We review the construction of the combinatorial knot DGA of Ng (in fact, we discuss only the degree zero part as this will suffice for our purposes). This DGA was defined in order to be a calculation of knot contact homology and was shown to be so in \cite{EENS12} (see \cite{Ng12} for more details). Let $\A_n$ be the noncommutative unital algebra over $\Z$ freely generated by $a_{ij}$, $1\le i\ne j\le n$. We define a homomorphism $\phi : B_n \rightarrow\Aut \A_n$ by defining it on the generators of $B_n$:

  \begin{equation}
  \phi_{\s_k}\colon
  \left\{
       \begin{array}{lr}
         a_{ij}\mapsto a_{ij} & i,j\ne k,k+1\\
         a_{k+1,i}\mapsto a_{ki} & i\ne k,k+1\\
         a_{i,k+1}\mapsto a_{ik} & i\ne k,k+1\\
         a_{k,k+1}\mapsto -a_{k+1,k} & \\
         a_{k+1,k}\mapsto -a_{k,k+1} & \\
         a_{ki}\mapsto a_{k+1,i} - a_{k+1,k}a_{ki} & i\ne k,k+1\\
         a_{ik}\mapsto a_{i,k+1} - a_{ik}a_{k,k+1} & i\ne k,k+1\\
       \end{array}
  \right.
  \label{DefnPhiMap}
  \end{equation}

  Let $\iota\colon B_n \rightarrow B_{n+1}$ be the inclusion $\s_i\mapsto\s_i$ so that the $(n+1)$ strand does not interact with those from $\beta\in B_n$, and define $\phi_\beta^*\in \Aut \A_{n+1}$ by $\phi_\beta^* = \phi_{\iota\subsmallp(\beta\subsmallp)}$. We then define the $n\times n$ matrices $\Phi_\beta^L$ and $\Phi_\beta^R$ with entries in $\A_n$ by

  $$\phi_\beta^*(a_{i,n+1}) = \sum_{j=1}^n(\Phi_\beta^L)_{ij}a_{j,n+1}$$

  $$\phi_\beta^*(a_{n+1,i}) = \sum_{j=1}^na_{n+1,j}(\Phi_\beta^R)_{ji}$$

  Finally, let $R_0$ be the Laurent polynomial ring $\Z[\lambda^{\pm1},\mu^{\pm1}]$ and define matrices $\bf{A}$ and $\bf{\Lambda}$ over $R_0$ by

  \begin{equation}
  {\bf A}_{ij} = 
  \left\{
       \begin{array}{lr}
        a_{ij} & i<j\\
        -\mu a_{ij} & i>j\\
        1-\mu & i = j\\
       \end{array}
  \right.
  \label{def:Amatrix}
  \end{equation}
  \begin{equation}
  {\bf \Lambda} = \diag[\lambda\mu^{\omega(\beta)},1,\ldots,1].
  \label{defn:Lambda}
  \end{equation}

  \begin{defn}
  Suppose that $K$ is the closure of $\beta\in B_n$. Define $\mathcal{I}\subset \A_n\otimes R_0$ to be the ideal generated by the entries of $\bf{A} - \Lambda\cdot\Phi_\beta^L\cdot \bf{A}$ and $\bf{A} - \bf{A}\cdot\Phi_\beta^R\cdot\Lambda^{-1}$. The \emph{degree zero homology of the combinatorial knot DGA} is $\operatorname{HC}_0(K) = (\A_n\otimes R_0)/\mathcal{I}$.
  \label{defn:HC_0}
  \end{defn}
  
\subsection{Spanning arcs}
\label{SecSpanArcs}

  The proofs in Sections \ref{SecMain} and \ref{SecComments} require a number of computations of $\phi_\beta$ (and of $\phi_{\beta}^\ast$, for computing $\Phi_\beta^L$) for particular braids. Such computations are benefited by an alternate description of the automorphism, which we now explain.

  \begin{defn}Given $n>0$, let $D_n$ be a disk in $\C$ containing points $P=\{1,2,\ldots,n\}$ on the real line. A \emph{spanning arc} of $D_n$ is the isotopy class relative to $P$ of an oriented embedded path in $D$ which begins and ends in $P$. We define $\mathscr S_n$ as the associative ring freely generated by spanning arcs of $D_n$ modulo the ideal generated by the relation in Figure \ref{FigRelnPathAlg}. Denote by $c_{ij}\in\mathscr S_n$ the element represented by a spanning arc contained in the upper half-disk beginning at $i$ and ending at $j$.
  \label{DefnSpanningArcAlg}
  \end{defn}
  We understand the spanning arcs in Figure \ref{FigRelnPathAlg} to agree outside of a neighborhood of the depicted point in $P$.

  \begin{figure}[ht]
\begin{tikzpicture}[scale=0.5,>=stealth]
  \filldraw (-1,1) circle (3pt)
        (3.5,1) circle (3pt)
        (7.5,1) circle (3pt)
        (8.75,1)circle(3pt);
  \filldraw   (8.125,1) circle(1pt);
  \draw[thick,->] 
        (-2,1) ..controls (-1.5,1) and (-1.5,.5) ..(-1,.5)
             ..controls (-.5,.5) and (-.5,1) ..(0,1);
  \draw[thick,->] 
        (2.5,1) ..controls (3,1) and (3,1.5) ..(3.5,1.5)
             ..controls (4,1.5) and (4,1) ..(4.5,1);
  \draw[thick,->] (6.5,1) -- (7.5,1);
  \draw[thick,->] (8.75,1) -- (9.75,1);
  \draw (1.25,1) node {$=$}
      (5.5,1) node {$-$};
  \foreach \x in {-2,2.5,6.5,8.75}
  \draw (\x,1) node[left] {$\big[$};
  \foreach \x in {0,4.5,7.5,9.75}
  \draw   (\x,1) node[right] {$\big]$};     
\end{tikzpicture}
\caption{Relation in $\mathscr S_n$}
\label{FigRelnPathAlg}
  \end{figure}

We consider $\beta$ as a mapping class of $(D,P)$ and denote by $\beta\cdot c$ the image of the spanning arc $c$. By convention $\s_k$ acts by rotating $k$ and $k+1$ about their midpoint in counter-clockwise fashion. It was shown in \cite[Section 2]{Ng05b} that there is a unique, well-defined map $\chi$ which sends each spanning arc of $D_n$ to an element of $\A_n$ such that 
    \begin{itemize}
      \item[(i)] $\chi(\beta\cdot c) = \phi_{\beta}(\chi(c))$ for any spanning arc $c$ and $\beta\in B_n$;
      \item[(ii)] $\chi(c_{ij})=a_{ij}$ if $i<j$, $\chi(c_{ij})=-a_{ij}$ if $i>j$.
    \end{itemize}
Furthermore, $\chi$ factors through $\mathscr S_n$, is injective, and by the relation in Figure \ref{FigRelnPathAlg} the value of $\phi_{\beta}(a_{ij})$ can be determined from (i) and (ii). This constitutes an essential technique for our calculations of $\phi_\beta$.

    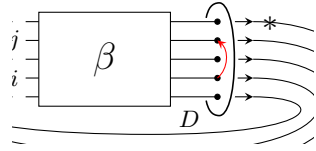
\begin{figure}[ht]
    \begin{tikzpicture}[scale=0.5,>=stealth]
      \clip (-.7,-1.25) rectangle (7.3,2.5);
      \draw(0,-.25)--(0,2.25)--(3.5,2.25)--(3.5,-.25)--cycle;
      \draw(1.75,1) node {{\Large $\beta$}};
      \foreach \y in {0,.5,1,1.5,2}
        \filldraw (4.75,\y) circle (2pt);
      \foreach \y in {0,.5,1,1.5,2}
        \draw(-.35,\y) -- (0,\y);
      \foreach \y in {0,.5,1,1.5,2} 
        \draw[thin] (3.5,\y) -- (4.75,\y);
      \foreach \y in {0,.5,1,1.5,2} 
        \draw[->] (5.18,\y) -- (5.7,\y);
      \foreach \y in {0,.5,1,1.5,2}
        \draw (5.7,\y) ..controls (7.7+\y*1.3,\y) and (7.7+\y*1.3,-\y-1) ..(3,-\y-1)
                ..controls (-3-\y*1.3,-\y-1) and (-2.7-\y*1.3,\y)..(-.7,\y);
      \draw[red,->]
          (4.75,.5) ..controls (5.05,.75) and (5.05,1.25) ..(4.75,1.5);
      \draw[draw=white,double=black,very thick]
          (4.45,2.1)  ..controls (4.5,2.35) and (4.6,2.5) .. (4.8,2.5)
                ..controls (5,2.5) and (5.2,2) .. (5.2 ,1)
                ..controls (5.2,0) and (5,-.5) .. (4.8,-.5)
                ..controls (4.6,-.5) and (4.5,-.35)..(4.45,-.1);
      \draw   (4,-.6) node {{\footnotesize $D$}};
      \draw (6.15,1.95) node {{\large $\ast$}};
      \draw (-.65,.5) node {{\footnotesize $i$}}
          (-.65,1.5) node{{\footnotesize $j$}};
    \end{tikzpicture}
    \caption{Cord $c_{ij}$ of $K=\hat \beta$}
    \label{FigA_nGens}
    \end{figure} 
  Computations of $\Phi_\beta^L$ are carried out in likewise manner, including $\beta$ into $B_{n+1}$ and considering spanning arcs $c_{j,n+1}$, $1\le j\le n$ of $D_{n+1}$. We will distinguish this situation by relabeling $n+1$ (and corresponding indices) with the symbol $\ast$. In figures, we put the point $\ast$ at the boundary of $D$.

  It will be convenient for us in Section \ref{SecMain} to consider the free left $\A_n$-module $\A_n^L = \A_n\langle a_{1\ast},\ldots,a_{n\ast} \rangle$ and right $\A_n$-module $\A_n^R = \langle a_{\ast 1},\ldots,a_{\ast n} \rangle\A_n$, which are each contained in $\A_{n+1}$. By definition, $\Phi_\beta^L$ (respectively $\Phi_\beta^R$) is the matrix in the above basis for the $\A_n$-automorphism of $\A_n^L$ (respectively $\A_n^R$) determined by the image of the basis under $\phi^\ast_\beta$ (which differs from the non-linear map given by restricting $\phi^\ast_\beta$ to these submodules).

  Finally, as we are considering braid satellites $K(\alpha,\gamma)$ with $\gamma\in B_p$ our perspective often considers the points in $D_{kp}$ as $k$ groups of $p$ points each. We find it convenient in figures of spanning arcs in $\mathscr S_{kp}$ to reflect this point of view. To do so, for each $i=0,\ldots,k-1$, we depict the points $\{ip+1,\ldots, (i+1)p\}$ by a horizontal segment, and if a spanning arc ends at $ip+s$ for $1\le s\le p$, it is depicted ending on the $(i+1)^{st}$ segment with a label $s$ (see example in Figure \ref{FigExSpanArckp}).

  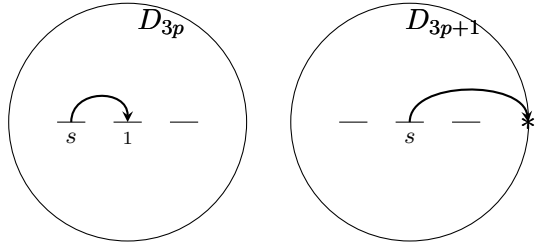
\begin{figure}[ht]
      \begin{tikzpicture}[scale=.75,>=stealth]
        \foreach \c in {0,5} \draw (\c,1) circle (60pt);
        \foreach \c in {0,5} \foreach \p in {-1.25,-.25,0.75} \draw (\c+\p,1) -- (\c+\p+.5,1);
        \foreach \c in {0,5}  
        \draw   (7.1,1) node {{\large $\ast$}}
                (0.6,2.75) node {$D_{3p}$}
                (5.6,2.75) node {$D_{3p+1}$};
        \draw[->,thick] (-1,1)  ..controls (-1,1.6) and (0,1.6) ..(0,1);
        \draw[->,thick] (5,1)  ..controls (5,1.75) and (7,1.75) ..(7.1,1);
        \foreach \c in {-1,5} \draw (\c,1) node[below] {{\footnotesize $s$}};
        \draw (0,1) node[below] {{\tiny 1}};
      \end{tikzpicture}
      \caption{Spanning arcs $c_{s,p+1}$ and $c_{p+s,\ast}$, $1\le s\le p$.}
      \label{FigExSpanArckp}
    \end{figure}

  Let $\text{perm}:B_n\to S_n$ denote the homomorphism from $B_n$ to the symmetric group sending $\s_k$ to the simple transposition interchanging $k, k+1$.

  \begin{lem} For some $\beta\in B_n$ and $1\le i\ne j\le n$, consider $(\Phi_\beta^L)_{ij}\in \A_n$ as a polynomial expression in the (non-commuting) variables $\{a_{kl}, 1\le k\ne l\le n\}$. Writing $i_0=\text{perm}(\beta)(i)$, every monomial in $(\Phi_\beta^L)_{ij}$ is a constant times $a_{i_0i_1}a_{i_1i_2}\ldots a_{i_{l-1},j}$ for some $l\ge 0$, the monomial being a constant if $l=0$ and only if $i_0=j$.
  \label{lem:monomial}
  \end{lem}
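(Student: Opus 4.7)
The plan is to induct on the word length of $\beta$ in the Artin generators $\s_k^{\pm 1}$. The base case $\beta=e$ is immediate: $(\Phi_e^L)_{ij}=\delta_{ij}$, and the sole nonzero entry ($j=i$) is the constant $1$, realizing the allowed $l=0$ case with $i_0=i=j$.

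For the inductive step, write $\beta' = \s_k^{\pm 1}\beta$ and use $\phi_{\beta'}^{*} = \phi_{\s_k^{\pm 1}}^{*}\circ\phi_\beta^{*}$. Then
\[
\sum_{j'} (\Phi_{\beta'}^L)_{ij'}\,a_{j',\ast} \;=\; \phi_{\s_k^{\pm 1}}^{*}\!\Bigl(\sum_{j} (\Phi_\beta^L)_{ij}\,a_{j,\ast}\Bigr).
\]
By hypothesis, each monomial of $(\Phi_\beta^L)_{ij}\,a_{j,\ast}$ is, up to sign, a ``chain'' $a_{\mu_0\mu_1}a_{\mu_1\mu_2}\cdots a_{\mu_{l-1},j}\,a_{j,\ast}$ with $\mu_0=\perm(\beta)(i)$. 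The core ingredient is a direct \emph{stability observation}: for each generator $a_{pq}$ (allowing $q=\ast$), $\phi_{\s_k^{\pm 1}}^{*}(a_{pq})$ is itself a sum of chain monomials running from $\perm(\s_k^{\pm 1})(p)$ to $\perm(\s_k^{\pm 1})(q)$, with $\perm$ extended to fix $\ast$. This is read off directly from \eqref{DefnPhiMap} and its inverse; the only non-single-term case is typified by $\phi_{\s_k}(a_{ki})=a_{k+1,i}-a_{k+1,k}a_{ki}$, which splits into the length-one chain $k{+}1\to i$ and the length-two chain $k{+}1\to k\to i$, both running from $\perm(\s_k)(k)=k+1$ to $\perm(\s_k)(i)=i$.

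Since chains with matching endpoints concatenate into a single longer chain, applying $\phi_{\s_k^{\pm 1}}^{*}$ to $a_{\mu_0\mu_1}\cdots a_{\mu_{l-1},j}\,a_{j,\ast}$ and expanding produces a sum of chain monomials running from $\perm(\beta')(i)=\perm(\s_k^{\pm 1})(\mu_0)$ to $\ast$. Collecting by the first subscript of the $a_{-,\ast}$-factor exhibits each $(\Phi_{\beta'}^L)_{ij'}$ in the asserted form, with the constant-term restriction preserved automatically (the empty chain persists only when start and end indices agree after the permutation). The main obstacle is the bookkeeping in the stability observation, although the number of cases is small; it could equally well be bypassed via the spanning-arc calculus of Section~\ref{SecSpanArcs}, decomposing $\beta\cdot c_{i,\ast}$ by iterated application of the relation in Figure~\ref{FigRelnPathAlg} into a sum of concatenations of simple spanning arcs, which manifestly exhibits the chain structure after applying $\chi$.
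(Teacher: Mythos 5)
Your argument is correct, and your primary route differs from the paper's. The paper proves this lemma entirely through the spanning-arc calculus: it considers the arc $\beta\cdot c_{i,\ast}$, which runs from $i_0=\perm(\beta)(i)$ to $\ast$, and observes that finitely many applications of the relation in Figure~\ref{FigRelnPathAlg} resolve it into a sum of concatenations of simple arcs, i.e.\ monomials $c_{i_0i_1}\cdots c_{i_{l-1},j}c_{j,\ast}$; applying $\chi$ then gives the statement. That is exactly the alternative you sketch in your last sentence. Your main argument instead inducts on the Artin word length of $\beta$ and verifies a ``chain stability'' property of $\phi_{\s_k^{\pm1}}^{\ast}$ directly from~\eqref{DefnPhiMap}, using that $\phi$ is a homomorphism and that chains with matching endpoints concatenate. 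This is more elementary and self-contained --- it never invokes the existence and properties of the map $\chi$ from \cite{Ng05b} --- at the cost of a finite case check on the generator images (including those of $\s_k^{-1}$, whose formulas you do not write out but which are routine to derive and do have the required chain form, e.g.\ $a_{k+1,i}\mapsto a_{ki}-a_{k,k+1}a_{k+1,i}$). The paper's version is shorter given that the spanning-arc machinery is already set up in Section~\ref{SecSpanArcs} and is reused heavily elsewhere. Both arguments handle the constant-term clause the same way: a monomial of $(\Phi_\beta^L)_{ij}$ is constant exactly when the corresponding chain in $\phi_\beta^\ast(a_{i,\ast})$ is the single factor $a_{j,\ast}$, which forces $j=i_0$.
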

  \begin{proof}We consider the spanning arc $\beta\cdot c_{i,\ast}$ which begins at $i_0$ and ends at $\ast$. Applying the relation in Figure \ref{FigRelnPathAlg} to the path equates it with a sum (or difference) of another path with the same endpoints and a product of two paths, the first beginning at $i_0$ and the other ending at $\ast$. A finite number of applications of this relation allows one to express the path as a polynomial in the $c_{kl}, 1\le k\ne l\le n$ where each monomial has the form $c_{i_0i_1}\ldots c_{i_{l-1},j}c_{j,\ast}$ for some $j$. The result follows from $\phi^\ast_\beta(a_{i,\ast}) = \phi^\ast_\beta(\chi(c_{i,\ast})) = \chi(\beta\cdot c_{i,\ast})$.
  \end{proof}

\subsection{Augmentations and augmentation rank}
\label{SecBG_AugRk}

  Augmentations of a differential graded algebra $(\A,\partial)$ are graded maps $(\A,\partial)\to (\C,0)$ that intertwine the differential (here $\C$ has grading zero). For our setting, if $\beta\in B_n$ is a braid representative of $K$, such a map corresponds precisely to a homomorphism $\epsilon:\A_n\otimes R_0\to\C$ such that $\epsilon$ sends elements of $\mathcal I$ to zero (see Definition \ref{defn:HC_0}).

  \begin{defn}
  Suppose that $K$ is the closure of $\beta\in B_n$. An \emph{augmentation} of $K$ is a homomorphism $\epsilon: \A_n\otimes R_0\rightarrow \C$ such that each element of $\mathcal I$ is sent by $\epsilon$ to zero.
  \label{defn:Aug}
  \end{defn}

  A correspondence between augmentations and certain representations of the knot group $\pi_K$ were studied in \cite{Cor13a}. Recall that $\pi_K$ is generated by meridians, which for a knot are all conjugate. Fix some meridian $m$.

  \begin{defn}
  For any integer $r\ge1$, a homomorphism $\rho:\pi_K\to\text{GL}_r\C$ is a \emph{KCH representation} if $\rho(m)$ is diagonalizable and has an eigenvalue of 1 with multiplicity $r-1$. We call $\rho$ a \emph{KCH irrep} if it is irreducible.
  \label{defn:KCHReps}
  \end{defn}

  In \cite{Ng08}, Ng describes an isomorphism between $HC_0(K)$ and an algebra constructed from elements of $\pi_K$. As discussed in \cite{Ng12} a KCH representation $\rho:\pi_K\to\text{GL}_r\C$ induces an augmentation $\epsilon_\rho$ of $K$. Given an augementation, the first author showed how to construct a KCH representation that induces it. In fact, we have the following rephrasing of results from \cite{Cor13a}.

  \begin{thm}[\cite{Cor13a}]
  Let $\epsilon:\A_n\otimes R_0\to\C$ be an augmentation with $\epsilon(\mu)\ne 1$. There is a KCH irrep $\rho:\pi_K\to\text{GL}_r\C$ such that $\epsilon_\rho=\epsilon$. Furthermore, for any KCH irrep $\rho:\pi_K\to\text{GL}_r\C$ such that $\epsilon_\rho = \epsilon$, the rank of $\epsilon({\bf A})$ equals $r$.
  \label{thm:AugKCH_Corresp}
  \end{thm}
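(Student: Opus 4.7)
The plan is to leverage Ng's identification of $HC_0(K)$ with a cord algebra built from $\pi_K$, under which each generator $a_{ij}$ corresponds to a specific meridional cord running between the $i$-th and $j$-th strands of a closed braid representative of $K$. An augmentation $\epsilon$ thus becomes a rule assigning complex numbers to such cords, compatibly with the skein-type relations induced by braid words, and the matrix $\mathbf{A}$ packages all of this cord data. Its image $\epsilon(\mathbf{A})$ records precisely what is needed to reconstruct a meridional representation.

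For the existence direction, set $r = \operatorname{rank}\epsilon(\mathbf{A})$ and build $\rho:\pi_K\to \GL_r\C$ by declaring $\rho(m) = \diag(\epsilon(\mu),1,\ldots,1)$ in some basis, and, for each strand $i$ of $\beta$, setting $\rho(m_i) = I + (\epsilon(\mu)-1)\, v_i w^T$, where $w$ is a fixed dual vector and the $v_i\in\C^r$ are read off from a factorization of $\epsilon(\mathbf{A})$. The hypothesis $\epsilon(\mu)\ne 1$ makes these rank-one perturbations nondegenerate and ensures each $\rho(m_i)$ has the required diagonalizable form with a single non-unit eigenvalue $\epsilon(\mu)$. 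Verifying that $\rho$ is a well-defined homomorphism reduces to checking the Wirtinger-type conjugation relations imposed by $\beta$, which are encoded precisely by the vanishing under $\epsilon$ of the generators of $\mathcal{I}$, namely the entries of ${\bf A}-{\bf\Lambda}\Phi_\beta^L{\bf A}$ and ${\bf A}-{\bf A}\Phi_\beta^R{\bf\Lambda}^{-1}$. That $\epsilon_\rho=\epsilon$ then follows by unwinding the cord algebra isomorphism.

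Irreducibility is where the equality $\operatorname{rank}\epsilon(\mathbf{A})=r$ enters decisively. The subspace $V\subset\C^r$ spanned by the $v_i$ is invariant under every $\rho(m_i)$ (each acts by a scalar along $v_i$ and trivially on $w^\perp$), so a proper $V$ would reduce $\rho$; conversely, $\dim V$ is exactly $\operatorname{rank}\epsilon(\mathbf{A})$, so choosing the ambient dimension to match this rank forces $V = \C^r$ and makes $\rho$ irreducible. The accounting reverses for the second claim: given any KCH irrep $\rho$ of dimension $r$ with $\epsilon_\rho=\epsilon$, write $\rho(m_i) - I = (\epsilon(\mu)-1)\, v_i w^T$; irreducibility forces $\mathrm{span}\{v_i\} = \C^r$, while the formula defining $\epsilon_\rho$ identifies $\epsilon(\mathbf{A})$ with a matrix whose column span coincides with $\mathrm{span}\{v_i\}$, giving $\operatorname{rank}\epsilon(\mathbf{A}) = r$.

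The main obstacle is the bookkeeping between three parallel languages: the combinatorial DGA generators $a_{ij}$, the cord algebra in $\pi_K$, and the rank-one perturbation description of KCH representations. Once these are carefully aligned, both halves of the statement follow in parallel by translating $\operatorname{rank}\epsilon(\mathbf{A})$ into the dimension of $\mathrm{span}\{v_i\}$. The hypothesis $\epsilon(\mu)\ne 1$ is used throughout: without it the rank-one perturbations collapse, meridians can act trivially on too large a subspace, and the clean identification of $r$ with $\operatorname{rank}\epsilon(\mathbf{A})$ breaks down.
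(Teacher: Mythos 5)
The paper does not prove this statement; it is imported verbatim from \cite{Cor13a}, so your sketch has to be measured against the argument there. Your overall strategy --- pass through Ng's cord-algebra model, realize each meridian image as a rank-one perturbation of the identity, and identify $\operatorname{rank}\epsilon(\mathbf{A})$ with the dimension of the span of the perturbation vectors --- is the right one. But there is a genuine error in the ansatz: you set $\rho(m_i)=I+(\epsilon(\mu)-1)\,v_iw^T$ with a \emph{single fixed} dual vector $w$. Then every $\rho(m_i)$ fixes the hyperplane $\ker(w^T)$ pointwise, so $\ker(w^T)$ is an $(r-1)$-dimensional invariant subspace of the whole representation, which is therefore reducible whenever $r>1$. (Your later remark that each $\rho(m_i)$ ``acts trivially on $w^\perp$'' is exactly the statement that this common invariant subspace exists.) The correct construction takes $\rho(m_i)=I+(\epsilon(\mu)-1)\,u_iv_i^T$ with \emph{both} families $\{u_i\}$ and $\{v_i\}$ varying with $i$ and normalized by $v_i^Tu_i=1$; a factorization of $\epsilon(\mathbf{A})$ (up to invertible diagonal rescalings absorbing the signs and powers of $\mu$ in (\ref{def:Amatrix})) supplies both families, not just one. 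Irreducibility is then equivalent to the conjunction of $\operatorname{span}\{u_i\}=\C^r$ (otherwise that span is invariant) and $\bigcap_i\ker(v_i^T)=0$ (otherwise that intersection is pointwise fixed), and with both conditions the factorization $\epsilon(\mathbf{A})=V^TU$ with $V^T$ injective and $U$ surjective yields $\operatorname{rank}\epsilon(\mathbf{A})=r$. The same correction is needed in your converse direction, where you again write $\rho(m_i)-I$ with a fixed $w$.

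Two smaller omissions: the hard content of the existence half is precisely the claim you dispatch in one sentence, namely that vanishing of $\epsilon$ on the entries of $\mathbf{A}-\mathbf{\Lambda}\Phi_\beta^L\mathbf{A}$ and $\mathbf{A}-\mathbf{A}\Phi_\beta^R\mathbf{\Lambda}^{-1}$ is equivalent to the Wirtinger relations holding for the proposed matrices --- this requires the chain-rule identities for $\Phi_\beta^L,\Phi_\beta^R$ and is where most of the work in \cite{Cor13a} lives. And the variable $\lambda$ never appears in your sketch, although $\epsilon(\lambda)$ must be matched to the action of the longitude for $\epsilon_\rho=\epsilon$ to hold.
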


  Considering Theorem \ref{thm:AugKCH_Corresp} we make the following definition.

  \begin{defn}
  The \emph{rank} of an augmentation $\epsilon:\A_n\otimes R_0\to\C$ with $\epsilon(\mu)\ne 1$ is the rank of $\epsilon({\bf A})$. Given a knot $K$, the \emph{augmentation rank of} $K$, denoted $\text{ar}(K)$, is the maximum rank among augmentations of $K$.
  \label{defn:AugRk}
  \end{defn}

  \begin{rem} By Theorem \ref{thm:AugKCH_Corresp} the set of ranks of augmentations of a given $K$ does not depend on choice of braid representative.
  \end{rem}

  It is the case that $\text{ar}(K)$ is well-defined. That is, given $K$ there is a bound on the maximal rank of an augmentation of $K$.

  \begin{thm}[\cite{Cor13b}] Given a knot $K\subset S^3$, if $g_1,\ldots,g_d$ are meridians that generate $\pi_K$ and $\rho:\pi_K\to\text{GL}_r\C$ is a KCH irrep then $r\le d$.
  \label{thm:DimBound}
  \end{thm}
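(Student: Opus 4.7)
The plan is to prove this by a short linear-algebra argument exploiting the rigidity of the KCH condition. The key observation is that each $\rho(g_i)$ has a codimension-one fixed subspace, and the intersection of these fixed subspaces is forced to be simultaneously non-trivial (when $r>d$) and $\rho(\pi_K)$-invariant, contradicting irreducibility.

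First I would use that in a knot group $\pi_K$ any two meridians are conjugate, so each $\rho(g_i)$ is conjugate to $\rho(m)$ for a fixed meridian $m$. Since $\rho$ is a KCH representation, $\rho(m)$ is diagonalizable with $1$ as an eigenvalue of multiplicity $r-1$ (Definition \ref{defn:KCHReps}), and conjugation preserves this Jordan data. In particular, for each $i$ the fixed subspace
\[
F_i \;:=\; \ker\!\bigl(\rho(g_i)-I\bigr)
\]
is a hyperplane in $\C^r$.

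Next, set $W := \bigcap_{i=1}^{d} F_i$. As an intersection of $d$ hyperplanes, $\dim W \ge r - d$. Because $g_1,\ldots,g_d$ generate $\pi_K$, any vector fixed by each $\rho(g_i)$ is fixed by all of $\rho(\pi_K)$, so $W$ is a $\rho(\pi_K)$-invariant subspace on which $\rho$ acts as the identity. Now invoke the irreducibility of $\rho$: either $W = \C^r$, in which case $\rho$ is the trivial representation and irreducibility forces $r=1\le d$; or $W = 0$, which forces $r - d \le 0$, giving $r \le d$.

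The argument is short and the main "obstacle" is really a sanity check rather than a substantive step: one must be sure the diagonalizability clause in the KCH definition yields an honest codimension-one kernel (so that the hyperplane count $\dim W \ge r-d$ is valid), as opposed to merely a generalized eigenspace. This is precisely why diagonalizability is built into Definition \ref{defn:KCHReps}. Everything else is elementary linear algebra together with the irreducibility hypothesis; no use of the DGA structure or of any particular feature of $\pi_K$ beyond conjugacy of meridians is required.
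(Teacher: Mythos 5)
Your argument is correct: the fixed subspaces $F_i=\ker(\rho(g_i)-I)$ are indeed of codimension at most one (this is exactly where diagonalizability and the multiplicity-$(r-1)$ eigenvalue $1$ enter), their intersection $W$ is pointwise fixed by a generating set and hence $\rho(\pi_K)$-invariant, and irreducibility then forces $r\le d$. The paper itself gives no proof here --- it cites \cite{Cor13b} --- and your hyperplane-intersection argument is essentially the standard one from that reference (equivalently phrased there via the rank-one images $\operatorname{im}(\rho(g_i)-I)$ spanning $\C^r$), so there is nothing to flag.
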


  As in the introduction, if we denote the meridional rank of $\pi_K$ by $\text{mr}(K)$, then Theorem \ref{thm:DimBound} implies that $\text{ar}(K)\le\text{mr}(K)$. In addition, the geometric quantity $b(K)$ called the bridge index of $K$ is never less than $\text{mr}(K)$. Thus we have the following corollary.
    
\begin{cor}[\cite{Cor13b}] Given a knot $K\subset S^3$,
$$\text{ar}(K)\le\text{mr}(K)\le b(K)$$
\label{cor:DimBound}
\end{cor}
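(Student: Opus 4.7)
The plan is to assemble the two inequalities from the machinery already set up in Section \ref{SecBG_AugRk}. The first inequality $\ar(K)\le \mr(K)$ is essentially a repackaging of Theorem \ref{thm:DimBound} via the augmentation/KCH-irrep correspondence of Theorem \ref{thm:AugKCH_Corresp}; the second inequality $\mr(K)\le b(K)$ is a classical consequence of the fact that a bridge presentation produces a meridional presentation of $\pi_K$.

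For $\ar(K)\le \mr(K)$, I would pick an augmentation $\epsilon$ achieving the maximum, so that by Definition \ref{defn:AugRk} we have $\epsilon(\mu)\ne 1$ and $r:=\operatorname{rank}\epsilon(\mathbf{A})=\ar(K)$. Theorem \ref{thm:AugKCH_Corresp} then supplies a KCH irrep $\rho\colon\pi_K\to\GL_r\C$ with $\epsilon_\rho=\epsilon$ of matching dimension $r$. Fixing any generating set of meridians $g_1,\ldots,g_d$ with $d=\mr(K)$, Theorem \ref{thm:DimBound} applied directly to $\rho$ gives $r\le d$, i.e.\ $\ar(K)\le\mr(K)$.

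For $\mr(K)\le b(K)$, I would take an embedding of $K$ realizing the bridge number and consider its bridge diagram: the $b(K)$ overstrands each determine a meridian of $\pi_K$, and a standard Wirtinger-type argument on this diagram shows that these $b(K)$ meridians already generate $\pi_K$. Hence $\mr(K)\le b(K)$, and combining the two steps yields the corollary.

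There is no substantive obstacle here since both inputs are quoted theorems. The only mild point to note is that Definition \ref{defn:AugRk} only assigns a rank to augmentations with $\epsilon(\mu)\ne 1$, but this dovetails exactly with the hypothesis of Theorem \ref{thm:AugKCH_Corresp}, so the argument passes cleanly from augmentations of top rank to KCH irreps of matching dimension without loss.
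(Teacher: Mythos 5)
Your proposal is correct and follows essentially the same route as the paper: the first inequality is obtained by passing from a maximal-rank augmentation to a KCH irrep via Theorem \ref{thm:AugKCH_Corresp} and then applying Theorem \ref{thm:DimBound} to a minimal meridional generating set, and the second is the standard Wirtinger-presentation observation that a bridge position yields $b(K)$ generating meridians, exactly as the paper indicates.
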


  Hence to verify that $\text{mr}(K)=b(K)$ it suffices to find a rank $b(K)$ augmentation of $K$. Herein we concern ourselves with a setting where $\text{ar}(K)=n$ and there is a braid $\beta\in B_n$ which closes to $K$. This is a special situation, since $b(K)$ is strictly less than the braid index for many knots.

\subsection{Finding augmentations}
\label{SecBG_AugExist}
  The following theorem concerns the behavior of the matrices $\Phi_\beta^L$ and $\Phi_\beta^R$ under the product in $B_n$. It is an essential tool for studying $HC_0(K)$ and is central to our arguments.

  \begin{thm}[\cite{Ng05}, Chain Rule] Let $\beta_1,\beta_2$ be braids in $B_n$. Then $\Phi_{\beta_1\beta_2}^L = \phi_{\beta_1}(\Phi_{\beta_2}^L)\cdot\Phi_{\beta_1}^L$ and $\Phi_{\beta_1\beta_2}^R = \Phi_{\beta_1}^R\cdot\phi_{\beta_1}(\Phi_{\beta_2}^R)$.
  \label{thm:ChainRule}
  \end{thm}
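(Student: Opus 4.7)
The plan is to reduce the Chain Rule to the functoriality of $\phi$ combined with the uniqueness of coefficients in the defining expansions of $\Phi_\beta^L$ and $\Phi_\beta^R$. The first observation I would record is that if $\beta\in B_n$ then $\phi^\ast_\beta$ restricted to the subalgebra $\A_n\subset\A_{n+1}$ agrees with $\phi_\beta$. This is immediate from the formulas in (\ref{DefnPhiMap}): the generators of $\iota(\beta)\in B_{n+1}$ are all $\sigma_k$ with $1\le k\le n-1$, so an index $n+1$ is never introduced when $\phi^\ast_\beta$ acts on an element of $\A_n$. A second, parallel observation is that since $\iota:B_n\to B_{n+1}$ and $\phi:B_{n+1}\to\Aut\A_{n+1}$ are both homomorphisms, we have $\phi^\ast_{\beta_1\beta_2}=\phi^\ast_{\beta_1}\circ\phi^\ast_{\beta_2}$.

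For the left case, I would compute directly:
\begin{align*}
\phi^\ast_{\beta_1\beta_2}(a_{i,n+1})
&=\phi^\ast_{\beta_1}\Bigl(\sum_j(\Phi^L_{\beta_2})_{ij}\,a_{j,n+1}\Bigr)\\
&=\sum_{j,k}\phi_{\beta_1}\bigl((\Phi^L_{\beta_2})_{ij}\bigr)\,(\Phi^L_{\beta_1})_{jk}\,a_{k,n+1},
\end{align*}
where the first observation lets me replace $\phi^\ast_{\beta_1}$ with $\phi_{\beta_1}$ on the coefficient $(\Phi^L_{\beta_2})_{ij}\in\A_n$, and the definition of $\Phi^L_{\beta_1}$ is used to expand $\phi^\ast_{\beta_1}(a_{j,n+1})$. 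Comparing with the defining expansion $\phi^\ast_{\beta_1\beta_2}(a_{i,n+1})=\sum_k(\Phi^L_{\beta_1\beta_2})_{ik}a_{k,n+1}$ and invoking the fact that $\{a_{k,n+1}\}_{k=1}^n$ is a free left $\A_n$-basis of $\A_n^L$ gives the identity
\[(\Phi^L_{\beta_1\beta_2})_{ik}=\sum_j\phi_{\beta_1}\bigl((\Phi^L_{\beta_2})_{ij}\bigr)(\Phi^L_{\beta_1})_{jk},\]
which is exactly $\Phi^L_{\beta_1\beta_2}=\phi_{\beta_1}(\Phi^L_{\beta_2})\cdot\Phi^L_{\beta_1}$. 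The right-hand version is entirely analogous: one expands $\phi^\ast_{\beta_1\beta_2}(a_{n+1,i})$ by moving the scalar factor to the right and uses the corresponding free right $\A_n$-module basis of $\A_n^R$; the reversed order of multiplication is why the formula reads $\Phi^R_{\beta_1}\cdot\phi_{\beta_1}(\Phi^R_{\beta_2})$.

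The main obstacle is essentially bookkeeping: one must keep track of the non-commutative order in which monomials are assembled and verify the ``coefficients live in $\A_n$'' principle so that $\phi^\ast_{\beta_1}$ may be replaced by $\phi_{\beta_1}$ on those coefficients. The uniqueness step implicitly relies on the fact that every monomial in $\phi^\ast_\beta(a_{i,n+1})$ terminates in some $a_{k,n+1}$ (and begins with some $a_{n+1,k}$ on the right-hand side), a structural property in the spirit of Lemma~\ref{lem:monomial} which justifies interpreting $\phi^\ast_\beta(a_{i,n+1})$ as an element of the free left $\A_n$-module $\A_n^L$.
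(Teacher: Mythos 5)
Your argument is correct: the two observations (that $\phi^\ast_{\beta}$ restricts to $\phi_\beta$ on $\A_n\subset\A_{n+1}$, and that $\beta\mapsto\phi^\ast_\beta$ is a homomorphism), combined with uniqueness of coefficients in the free modules $\A_n^L$ and $\A_n^R$, give exactly the Chain Rule. The paper itself offers no proof — it quotes the result from \cite{Ng05} — and your derivation is essentially the standard one given there, so there is nothing further to compare.
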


  Another property of $\Phi_\beta^L$ and $\Phi_\beta^R$ that is important to us is the following symmetry. Define an involution $x\mapsto\overline x$ on $\A_n$ (termed \emph{conjugation}) as follows: first set $\overline{a_{ij}}=a_{ji}$; then, for any $x,y\in\A_n$, define $\overline{xy}=\overline y\hspace*{1pt}\overline x$ and extend the operation linearly to $\A_n$.

  \begin{thm}[\cite{Ng05}, Prop.\hspace*{-0.7pt} 6.2]For a matrix of elements in $\A_n$, let $\overline{M}$ be the matrix such that $\left(\overline M\right)_{ij} = \overline{M_{ij}}$. Then for $\beta\in B_n$, $\Phi_\beta^R$ is the transpose of $\overline{\Phi_\beta^L}$.
  \label{thm:Transpose}
  \end{thm}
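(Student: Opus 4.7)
The plan is induction on the word length of $\beta$ in the Artin generators, using the chain rule (Theorem \ref{thm:ChainRule}) as the engine. Writing $M^\ast := (\overline{M})^T$, the theorem asserts $\Phi_\beta^R = (\Phi_\beta^L)^\ast$. Because entrywise conjugation is an anti-homomorphism, a short entrywise check gives $(MN)^\ast = N^\ast M^\ast$ for matrices over $\A_n$, and $\ast$ is an involution.

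For the base case, I would compute $\Phi_{\sigma_k}^L$ and $\Phi_{\sigma_k}^R$ directly from (\ref{DefnPhiMap}). Since $\sigma_k$ acts trivially on strands other than $k, k+1$, applying $\phi_{\sigma_k}^\ast$ to $a_{i,n+1}$ and to $a_{n+1,i}$ yields matrices equal to the identity outside the $\{k,k+1\}\times\{k,k+1\}$ block, while the block itself is
\[\begin{pmatrix} -a_{k+1,k} & 1 \\ 1 & 0 \end{pmatrix}\text{ for } \Phi_{\sigma_k}^L \qquad\text{and}\qquad \begin{pmatrix} -a_{k,k+1} & 1 \\ 1 & 0 \end{pmatrix}\text{ for } \Phi_{\sigma_k}^R.\]
Since both blocks are symmetric and $\overline{a_{k+1,k}} = a_{k,k+1}$, one reads off $\Phi_{\sigma_k}^R = (\Phi_{\sigma_k}^L)^\ast$; an analogous computation handles $\sigma_k^{-1}$.

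For the inductive step, assume the identity for $\beta_1, \beta_2 \in B_n$. The chain rule combined with $(MN)^\ast = N^\ast M^\ast$ gives $(\Phi_{\beta_1\beta_2}^L)^\ast = (\Phi_{\beta_1}^L)^\ast \cdot (\phi_{\beta_1}(\Phi_{\beta_2}^L))^\ast = \Phi_{\beta_1}^R \cdot (\phi_{\beta_1}(\Phi_{\beta_2}^L))^\ast$, while the chain rule for $\Phi^R$ with the inductive hypothesis gives $\Phi_{\beta_1\beta_2}^R = \Phi_{\beta_1}^R \cdot \phi_{\beta_1}((\Phi_{\beta_2}^L)^\ast)$. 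Since $\phi_{\beta_1}$ acts entrywise and commutes with transposition automatically, equality of these two expressions reduces to the key commutation $\phi_{\beta}(\overline{x}) = \overline{\phi_{\beta}(x)}$ for all $x \in \A_n$.

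The main (if mildly tedious) obstacle is therefore this lemma: $\phi_\beta$ commutes with conjugation on $\A_n$. As $\phi_\beta$ is an algebra homomorphism and conjugation is an anti-homomorphism, the identity $\phi_\beta(\overline{xy}) = \phi_\beta(\overline{y})\phi_\beta(\overline{x})$ versus $\overline{\phi_\beta(xy)} = \overline{\phi_\beta(y)}\,\overline{\phi_\beta(x)}$ shows it suffices to verify the identity when $\beta = \sigma_k^{\pm 1}$ and $x = a_{ij}$. Inspection of (\ref{DefnPhiMap}) shows that the formulas for $\phi_{\sigma_k}$ paired under $(i,j)\leftrightarrow(j,i)$---for instance $a_{ki}\mapsto a_{k+1,i} - a_{k+1,k}a_{ki}$ versus $a_{ik}\mapsto a_{i,k+1}-a_{ik}a_{k,k+1}$---are exchanged precisely by reversing monomial order and swapping $k\leftrightarrow k+1$ within the index range $\{k,k+1\}$, which is exactly what conjugation does. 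Checking each of the seven cases in (\ref{DefnPhiMap}) closes the proof.
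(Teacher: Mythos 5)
Your proof is correct: the base-case blocks for $\Phi_{\sigma_k}^L$ and $\Phi_{\sigma_k}^R$ match a direct computation from (\ref{DefnPhiMap}), the identity $(MN)^\ast = N^\ast M^\ast$ for $M^\ast = \overline{M}^T$ holds over $\A_n$, and the generator-by-generator check that $\phi_{\sigma_k^{\pm 1}}$ commutes with conjugation is exactly the lemma needed to close the induction via the chain rule. The paper itself does not prove this statement (it is quoted from \cite{Ng05}, Prop.~6.2), and your argument is essentially the standard one used there, so there is nothing further to reconcile.
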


  The main result of the paper concerns augmentations with rank equal to the braid index of $K$. Define the diagonal matrix $\Delta(\beta)=\text{diag}[(-1)^{w(\beta)},1,\ldots,1]$. From Section 5 of \cite{Cor13a} we have the following.

  \begin{thm}[\cite{Cor13a}] If $K$ is the closure of $\beta\in B_n$ and has a rank $n$ augmentation $\epsilon:\A_n\otimes R_0\to\C$, then 
    \begin{equation}
    \epsilon(\Phi_\beta^L)=\Delta(\beta)=\epsilon(\Phi_\beta^R).
    \label{eqn:FindingAugs}
    \end{equation}
    Furthermore, any homomorphism $\epsilon:\A_n\to\C$ which satisfies (\ref{eqn:FindingAugs}) can be extended to $\A_n\otimes R_0$ to produce a rank $n$ augmentation of $K$. 
  \label{thm:RanknAugs}
  \end{thm}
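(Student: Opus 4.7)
My plan is to extract both equalities of \eqref{eqn:FindingAugs} from the defining relations of $\mathcal{I}$. Applying $\epsilon$ to the generators $\mathbf{A}-\Lambda\Phi_\beta^L\mathbf{A}$ and $\mathbf{A}-\mathbf{A}\Phi_\beta^R\Lambda^{-1}$ yields
\[
\epsilon(\mathbf{A}) = \epsilon(\Lambda)\,\epsilon(\Phi_\beta^L)\,\epsilon(\mathbf{A})
\quad\text{and}\quad
\epsilon(\mathbf{A}) = \epsilon(\mathbf{A})\,\epsilon(\Phi_\beta^R)\,\epsilon(\Lambda)^{-1}.
\]
By Definition \ref{defn:AugRk}, the rank of $\epsilon$ is the rank of the $n\times n$ matrix $\epsilon(\mathbf{A})$, so the rank-$n$ hypothesis forces $\epsilon(\mathbf{A})$ to be invertible. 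Cancelling produces $\epsilon(\Phi_\beta^L)=\epsilon(\Lambda)^{-1}$ and $\epsilon(\Phi_\beta^R)=\epsilon(\Lambda)$; since $\Lambda=\diag[\lambda\mu^{\omega(\beta)},1,\ldots,1]$, both of these are diagonal matrices whose only non-unit entry sits in position $(1,1)$.

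The remaining task is to identify that $(1,1)$-entry with $(-1)^{\omega(\beta)}$, which I would accomplish via a determinant computation. The formulas \eqref{DefnPhiMap} show that $\Phi_{\sigma_k}^L$ is the identity outside of rows and columns $k,k+1$, where it has the block $\bigl(\begin{smallmatrix} -a_{k+1,k} & 1 \\ 1 & 0 \end{smallmatrix}\bigr)$, and an analogous calculation puts $\Phi_{\sigma_k^{-1}}^L$ at identity outside the $(k,k+1)$-block $\bigl(\begin{smallmatrix} 0 & 1 \\ 1 & -a_{k,k+1}\end{smallmatrix}\bigr)$. Each block has determinant $-1$ independent of its off-diagonal value, so for any augmentation one has $\det\epsilon(\Phi_{\sigma_k^{\pm1}}^L)=-1$. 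Iterating Theorem \ref{thm:ChainRule} and using multiplicativity of the determinant on complex matrices gives $\det\epsilon(\Phi_\beta^L)=(-1)^{c(\beta)}$, where $c(\beta)$ is the length of any Artin-generator word for $\beta$. Since $c(\beta)-\omega(\beta)$ equals twice the number of negative letters, the two share parity, so the $(1,1)$-entry of $\epsilon(\Phi_\beta^L)=\epsilon(\Lambda)^{-1}$ is $(-1)^{\omega(\beta)}$; inverting, $\epsilon(\Lambda)=\Delta(\beta)$, and \eqref{eqn:FindingAugs} follows.

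For the converse I proceed constructively. Given $\epsilon:\A_n\to\C$ satisfying \eqref{eqn:FindingAugs}, pick $t\in\C\setminus\{0,1\}$ and set $\epsilon(\mu)=t$, $\epsilon(\lambda)=(-1)^{\omega(\beta)}t^{-\omega(\beta)}$. Then $\epsilon(\Lambda)=\Delta(\beta)$, so $\epsilon(\Lambda)\,\epsilon(\Phi_\beta^L)=\Delta(\beta)^2=I$ and likewise $\epsilon(\Phi_\beta^R)\,\epsilon(\Lambda)^{-1}=I$, which forces $\epsilon$ to annihilate the generators of $\mathcal{I}$. To ensure that the extension has rank exactly $n$, I would observe that $\det\epsilon(\mathbf{A})\in\C[t]$ has constant term $\det\epsilon(\mathbf{A})|_{t=0}=1$ (since $\mathbf{A}|_{\mu=0}$ is unit upper triangular), hence vanishes for only finitely many $t$; any other choice of $t$ yields an invertible $\epsilon(\mathbf{A})$ and hence a rank-$n$ augmentation. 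The main obstacle throughout is the determinant-parity computation in the middle paragraph; once that is isolated, both implications reduce to formal manipulation of the matrix relations defining $\mathcal{I}$.
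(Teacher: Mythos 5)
Your argument is correct: the paper itself does not prove Theorem \ref{thm:RanknAugs} but imports it from \cite{Cor13a}, and your reconstruction follows the same route as that source --- cancel the invertible $\epsilon({\bf A})$ in the relations defining $\mathcal I$ to get $\epsilon(\Phi_\beta^L)=\epsilon({\bf \Lambda})^{-1}$ and $\epsilon(\Phi_\beta^R)=\epsilon({\bf \Lambda})$, pin down the $(1,1)$-entry via $\det\epsilon(\Phi_\beta^L)=(-1)^{\omega(\beta)}$ using the chain rule (valid here because $\epsilon\circ\phi_\gamma$ lands in the commutative ring $\C$, where determinants multiply and each factor $\epsilon(\phi_\gamma(\Phi_{\s_k^{\pm1}}^L))$ is the identity off a $2\times2$ block of determinant $-1$), and for the converse choose $\epsilon(\mu)=t$ avoiding the finitely many zeros of $\det\epsilon({\bf A})$, which is nonzero at $t=0$ since ${\bf A}|_{\mu=0}$ is unit upper triangular. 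The only cosmetic slip is calling the variable entry of the $2\times2$ block ``off-diagonal'' (it sits on the diagonal), which does not affect the computation.
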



\section{Main Result}
\label{SecMain}
The proof of Theorem \ref{main} relies heavily on the characterization presented in Theorem \ref{thm:RanknAugs}. We define a homomorphism $\psi:\cl A_{kp}\to\cl A_k\otimes\cl A_p$ which, for $\alpha\in B_k$, suitably simplifies $\Phi_{\subpp\alpha}^L$ and $\Phi_{\subpp\alpha}^R$ when applied to the entries. Given $\gamma\in B_p$, Theorem \ref{thm:ChainRule} then allows us to construct a map that satisfies (\ref{eqn:FindingAugs}) for $\beta=\gamma\smallp(\alpha\smallp)$. The map in question is ``close to'' the tensor product of an augmentation of $\hat\alpha$ and an augmentation of $\hat\gamma$, composed with $\psi$.

Section \ref{MainProof} begins with an intermediate result, Proposition \ref{psiofbp}, followed by the proofs of Theorem \ref{main} and Corollary \ref{cor:iteratedCables}.  In Section \ref{PropAndLemmas} we prove Lemmas \ref{commutes} and \ref{Sigma_n}, which are needed to prove Proposition \ref{psiofbp}.

\subsection{Proof of main result}
\label{MainProof}
We recall the statement of Theorem \ref{main}.

\newtheorem*{main}{Theorem \ref{main}}
\begin{main}
If $\alpha\in B_k$ and $\gamma\in B_p$ are such that $\ar(\hat{\alpha})=k$ and $\ar(\hat{\gamma})=p$, then $\ar(K(\alpha,\gamma))=kp$.
\end{main}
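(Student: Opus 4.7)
The upper bound $\ar(K(\alpha,\gamma))\le kp$ is immediate from the observation in the introduction that $\ar(\hat\beta)\le n$ for any $\beta\in B_n$, applied to $\gamma\smallp(\alpha\smallp)\in B_{kp}$. The plan is to attain this bound by exhibiting a rank $kp$ augmentation, and for this I will invoke Theorem \ref{thm:RanknAugs}: it suffices to produce a homomorphism $\epsilon:\cl A_{kp}\to\C$ with
\[\epsilon\bigl(\Phi^L_{\gamma\subsmallp(\alpha\subsmallp)}\bigr)=\Delta\bigl(\gamma\smallp(\alpha\smallp)\bigr)=\epsilon\bigl(\Phi^R_{\gamma\subsmallp(\alpha\subsmallp)}\bigr).\]

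By the hypotheses, the second half of Theorem \ref{thm:RanknAugs} provides homomorphisms $\epsilon_\alpha:\cl A_k\to\C$ and $\epsilon_\gamma:\cl A_p\to\C$ satisfying (\ref{eqn:FindingAugs}) for $\alpha$ and $\gamma$ respectively. My candidate is $\epsilon=(\epsilon_\alpha\otimes\epsilon_\gamma)\circ\psi$, where $\psi:\cl A_{kp}\to\cl A_k\otimes\cl A_p$ is the homomorphism introduced at the start of Section \ref{MainProof}. After labeling the $kp$ strands of $\gamma\smallp(\alpha\smallp)$ by pairs $(a,s)\in\{1,\ldots,k\}\times\{1,\ldots,p\}$, with $(a,s)$ denoting the $s$-th parallel copy of strand $a$ of $\alpha$, it is natural for $\psi$ to send the generator $a_{(a,s),(b,t)}$ into the $\cl A_k$ tensor-factor when $a\ne b$ (an inter-group crossing, coming from the $\alpha$-pattern) and into the $\cl A_p$ tensor-factor when $a=b$ (an intra-group crossing, which only arises from $\bar\gamma$).

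The verification of (\ref{eqn:FindingAugs}) begins by splitting, via the Chain Rule (Theorem \ref{thm:ChainRule}),
\[\Phi^L_{\gamma\subsmallp(\alpha\subsmallp)} = \phi_{\subpp\alpha}\bigl(\Phi^L_{\bar\gamma}\bigr)\cdot\Phi^L_{\subpp\alpha},\]
and then applying $\psi$ entry-by-entry. Proposition \ref{psiofbp} is the technical heart: it should record how $\psi$ converts $\Phi^L_{\subpp\alpha}$ and $\Phi^L_{\bar\gamma}$ into controlled tensor-block forms whose $\cl A_k$-components mirror $\Phi^L_\alpha$ (governing the action of $\pp\alpha$ between groups) and whose $\cl A_p$-component on the first group mirrors $\Phi^L_\gamma$ (governing $\bar\gamma$). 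Evaluating via $\epsilon_\alpha\otimes\epsilon_\gamma$ then invokes (\ref{eqn:FindingAugs}) for $\alpha$ and $\gamma$ to collapse the result to a diagonal, which, after a writhe identification using $\omega(\gamma\smallp(\alpha\smallp))=p^2\omega(\alpha)+\omega(\gamma)$ together with $p^2\equiv p\pmod 2$, is exactly $\Delta(\gamma\smallp(\alpha\smallp))$. The $\Phi^R$ side is handled symmetrically via Theorem \ref{thm:Transpose} and the $\Phi^R$ halves of (\ref{eqn:FindingAugs}) for $\alpha,\gamma$.

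The main obstacle is Proposition \ref{psiofbp}. Tracking $\psi$ through the braid-induced automorphisms requires the spanning-arc machinery of Section \ref{SecSpanArcs}: Lemma \ref{lem:monomial} tells us that a monomial of $(\Phi^L_{\subpp\alpha})_{ij}$ is a chain of generators whose successive indices can weave between groups, so a clean tensor form is not self-evident. The supporting lemmas presumably resolve this. Lemma \ref{commutes} is likely an intertwining of the form $\psi\circ\phi_{\subpp\alpha}=(\phi_\alpha\otimes\id)\circ\psi$ on the relevant submodules, exploiting that $\pp\alpha$ is a parallel duplicate of $\alpha$; at the level of spanning arcs in $D_{kp}$, an arc between groups descends to a spanning arc in $D_k$ under the projection collapsing each group of points to a single point. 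Lemma \ref{Sigma_n} likely handles the $\bar\gamma$ factor: since $\bar\gamma$ acts trivially outside the first group, the entries of $\Phi^L_{\bar\gamma}$ that $\psi$ must touch are confined there and can be matched directly with $\Phi^L_\gamma$ in the $\cl A_p$ factor. With both lemmas in hand, Proposition \ref{psiofbp}, and hence Theorem \ref{main}, reduce to bookkeeping.
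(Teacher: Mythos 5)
Your overall architecture matches the paper's: reduce to Theorem \ref{thm:RanknAugs}, split $\Phi^L_{\gamma\subsmallp(\alpha\subsmallp)}$ by the Chain Rule, push everything through $\psi$, and use Proposition \ref{psiofbp} (which indeed says $\psi(\Phi^L_{\subpp\alpha})=\Phi^L_\alpha\otimes I_p$, proved via the intertwining Lemma \ref{commutes} exactly as you guessed). But your candidate augmentation $\epsilon=(\epsilon_\alpha\otimes\epsilon_\gamma)\circ\psi$ has a genuine gap: it fails equation (\ref{eqn:FindingAugs}) whenever $w(\alpha)$ is odd and $p>1$. Concretely, after applying $\pi\circ(\epsilon_\alpha\otimes\epsilon_\gamma)$ to $\psi\bigl(\phi_{\subpp\alpha}(\Phi^L_{\bar\gamma})\bigr)\cdot\psi\bigl(\Phi^L_{\subpp\alpha}\bigr)$ you get the product of $\diag[(-1)^{w(\gamma)},1,\ldots,1]$ with the $kp\times kp$ matrix $\pi(\Delta(\alpha)\otimes I_p)$, whose first $p$ diagonal entries all equal $(-1)^{w(\alpha)}$. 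The result is
\[\diag\bigl[(-1)^{w(\alpha)+w(\gamma)},\underbrace{(-1)^{w(\alpha)},\ldots,(-1)^{w(\alpha)}}_{p-1},1,\ldots,1\bigr],\]
which is a diagonal matrix but is \emph{not} $\Delta(\gamma\smallp(\alpha\smallp))$ when $w(\alpha)$ is odd: there are $p-1$ spurious entries equal to $-1$ in positions $2,\ldots,p$. Your parity bookkeeping $p^2\equiv p\pmod 2$ only confirms that the $(1,1)$-entry can be made correct; it does not address these other entries.

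The paper repairs this by replacing $\epsilon_\gamma$ with a sign-twisted homomorphism $\delta(a_{ij})=g(i)g(j)\epsilon_p(a_{ij})$, where $g:\{1,\ldots,p\}\to\{\pm1\}$ alternates along the $p$-cycle $\perm(\bar\gamma)$ (with a parity-of-$p$ adjustment at the start). The point is that Lemma \ref{lem:monomial} forces every monomial of $(\Phi^L_{\bar\gamma})_{ij}$ to be a chain $a_{i_0j_1}a_{j_1j_2}\cdots a_{j_m j}$ with $i_0=\perm(\bar\gamma)(i)$, so the interior signs cancel in pairs and $\delta\bigl((\Phi^L_{\bar\gamma})_{ij}\bigr)=g(i_0)g(j)\,\epsilon_p\bigl((\Phi^L_{\bar\gamma})_{ij}\bigr)$; the choice of $g$ then makes $\delta(\Phi^L_{\bar\gamma})=\diag[\pm(-1)^{w(\gamma)},-1,\ldots,-1,1,\ldots,1]$, precisely cancelling the $p-1$ unwanted $-1$'s coming from $\Delta(\alpha)\otimes I_p$. (This is why the paper says its map is only ``close to'' a tensor product of augmentations.) Two smaller inaccuracies: your description of $\psi$ omits that $\psi(a_{ij})=0$ when the group-index and within-group-index comparisons have opposite signs, which is essential for Proposition \ref{psiofbp}; and Lemma \ref{Sigma_n} is not about the $\bar\gamma$ factor but is the explicit computation of $\phi_{\subpp{\s_n}}$ on generators used to prove Lemma \ref{commutes}. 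Neither of these changes the verdict, but the missing sign correction is a real hole in the argument as written.
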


For $1\le i\le kp$, write $i = (q_i-1)p + r_i$, where $1\le r_i \le p$ and $1\le q_i\le k$.  For each generator $a_{ij}\in\A_{kp}, 1\le i\ne j\le kp$, define

\begin{equation}
\psi(a_{ij}) =
  \begin{cases}
         1\otimes a_{r_ir_j} & \colon q_i = q_j\\
         a_{q_iq_j}\otimes 1 & \colon r_i = r_j\\
         0 & \colon (q_i-q_j)(r_i-r_j)<0\\
         a_{q_iq_j}\otimes a_{r_ir_j} & \colon (q_i-q_j)(r_i-r_j)>0\\
  \end{cases},
  \label{defn:psi}
\end{equation}

\noindent which determines an algebra map $\psi\colon \A_{kp} \rightarrow \A_k\otimes \A_p$. Extend $\psi$ to a map $\psi^\ast:\A_{kp}^L\to\A_k^L\otimes\A_p^L$ that takes one canonical basis to another: $\psi^\ast(a_{i\ast}) = a_{q_i,\ast}\otimes a_{r_i,\ast}$ for any $1\le i\le kp$. Note, if we extend conjugation to $\A_k\otimes\A_p$ by applying it to each factor, then $\psi(\overline{a_{ij}}) = \overline{\psi(a_{ij})}$.

\begin{prop}\label{psiofbp}
$\psi\left(\Phi_{\subpp\alpha}^L\right) = \Phi_\alpha^L\otimes I_p$ and $\psi\left(\Phi_{\subpp\alpha}^R\right) = \Phi_\alpha^R\otimes I_p$\\
for any braid $\alpha$.
\end{prop}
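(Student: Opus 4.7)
The plan is to proceed by induction on the word length of $\alpha$ in the Artin generators of $B_k$, using the Chain Rule (Theorem \ref{thm:ChainRule}) as the inductive engine. When $\alpha$ is trivial both $\Phi_{\pp\alpha}^L$ and $\Phi_\alpha^L$ are identity matrices and $\psi(I_{kp})=I_k\otimes I_p$, so the claim is immediate. When $\alpha=\sigma_j^{\pm1}$ is a single generator, one has to compute $\Phi_{\pp{\sigma_j^{\pm1}}}^L$ explicitly and check that applying $\psi$ entrywise yields $\Phi_{\sigma_j^{\pm1}}^L\otimes I_p$; this is the promised content of Lemma \ref{Sigma_n} and functions as the genuine base case of the induction.

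For the inductive step, factor $\alpha=\alpha_1\alpha_2$. Since cabling is a group homomorphism, $\pp{\alpha_1\alpha_2}=\pp{\alpha_1}\pp{\alpha_2}$, and Theorem \ref{thm:ChainRule} gives
\begin{equation*}
\Phi_{\pp\alpha}^L=\phi_{\pp{\alpha_1}}\!\bigl(\Phi_{\pp{\alpha_2}}^L\bigr)\cdot\Phi_{\pp{\alpha_1}}^L.
\end{equation*}
Because $\psi$ is a ring homomorphism, applying it entrywise commutes with matrix multiplication, so
\begin{equation*}
\psi\!\bigl(\Phi_{\pp\alpha}^L\bigr)=\psi\!\bigl(\phi_{\pp{\alpha_1}}(\Phi_{\pp{\alpha_2}}^L)\bigr)\cdot\psi\!\bigl(\Phi_{\pp{\alpha_1}}^L\bigr).
\end{equation*}
The key tool I would need here is the intertwining identity $\psi\circ\phi_{\pp{\alpha_1}}=(\phi_{\alpha_1}\otimes\id_{\A_p})\circ\psi$, which should be the substance of Lemma \ref{commutes}. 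Granting this, together with the inductive hypothesis on $\alpha_2$, gives $\psi(\phi_{\pp{\alpha_1}}(\Phi_{\pp{\alpha_2}}^L))=\phi_{\alpha_1}(\Phi_{\alpha_2}^L)\otimes I_p$. Invoking the inductive hypothesis on $\alpha_1$ and the mixed product rule $(A\otimes B)(C\otimes D)=AC\otimes BD$, the right-hand side collapses to $\phi_{\alpha_1}(\Phi_{\alpha_2}^L)\Phi_{\alpha_1}^L\otimes I_p$, which equals $\Phi_\alpha^L\otimes I_p$ by a second application of Theorem \ref{thm:ChainRule} inside $\A_k$.

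For the $R$-version I would not repeat the induction. By Theorem \ref{thm:Transpose}, $\Phi_\beta^R$ is the transpose of $\overline{\Phi_\beta^L}$ for any braid $\beta$. The compatibility $\psi(\overline{x})=\overline{\psi(x)}$ recorded just before the statement ensures $\psi$ commutes with entrywise conjugation, and both transposition and factorwise conjugation interact cleanly with the Kronecker product, so
\begin{equation*}
\psi\!\bigl(\Phi_{\pp\alpha}^R\bigr)=\overline{\psi(\Phi_{\pp\alpha}^L)}^{\,T}=\bigl(\overline{\Phi_\alpha^L\otimes I_p}\bigr)^{T}=(\overline{\Phi_\alpha^L})^{T}\otimes I_p=\Phi_\alpha^R\otimes I_p.
\end{equation*}

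The main obstacle is thus packaged into the two forthcoming lemmas. Lemma \ref{commutes} is a claim about how $\psi$ interacts with the cabled braid action on $\A_{kp}$, and I would expect it to reduce, via a further induction, to a check on Artin generators of $B_k$ using the formulas in (\ref{DefnPhiMap}) together with the block structure of the indices $(q_i,r_i)$. Lemma \ref{Sigma_n} is the more concrete obstruction: one must compute $\Phi_{\pp{\sigma_j^{\pm1}}}^L$ directly, most efficiently via the spanning-arc calculus of Section \ref{SecSpanArcs}, tracking how the $p^2$ crossings introduced by cabling a single generator move and split the spanning arcs $c_{i,\ast}$ and then resolving the result through repeated application of the relation of Figure \ref{FigRelnPathAlg}. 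This combinatorial bookkeeping is where I expect the bulk of the work to lie.
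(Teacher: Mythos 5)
Your argument is correct, but it is organized differently from the paper's. The paper does not induct on the word length of $\alpha$ at all: since Lemma \ref{commutes} is already stated for an arbitrary braid $\alpha$, the paper simply applies it once to each basis element $a_{i\ast}$, expands both sides of $\psi^\ast(\phi^\ast_{\subpp{\alpha}}(a_{i\ast})) = (\phi^\ast_\alpha\otimes\id)\psi^\ast(a_{i\ast})$ in the canonical basis $\{a_{q_j\ast}\otimes a_{r_j\ast}\}$, and compares coefficients to read off $\psi((\Phi_{\subpp{\alpha}}^L)_{ij})=0$ when $r_i\ne r_j$ and $=(\Phi_\alpha^L)_{q_iq_j}\otimes 1$ when $r_i=r_j$. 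Your Chain Rule induction re-derives in the inductive step something you already need in full generality: the intertwining $\psi\circ\phi_{\subpp{\alpha_1}}=(\phi_{\alpha_1}\otimes\id)\circ\psi$ for an arbitrary prefix $\alpha_1$ is essentially Lemma \ref{commutes} itself (note that you need it on all of $\A_{kp}$, i.e.\ on the matrix entries, not just on the module $\A_{kp}^L$; this slightly stronger statement is what the proof of Lemma \ref{commutes} actually verifies on generators $a_{ij}$ with $j\le kp$ as well as $j=\ast$). Once that lemma is in hand, the induction and the separate base-case computation of $\Phi^L_{\subpp{\sigma_j^{\pm 1}}}$ are unnecessary; in particular Lemma \ref{Sigma_n} only treats the positive generator, so your base case for $\sigma_j^{-1}$ would need the additional (easy) remark that the intertwining identity for an automorphism passes to its inverse. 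Your treatment of the $R$-version via Theorem \ref{thm:Transpose} and $\psi(\overline{x})=\overline{\psi(x)}$ coincides with the paper's. What your route buys is a self-contained deduction from the Chain Rule; what the paper's buys is brevity, since the whole content is concentrated in Lemma \ref{commutes}.
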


A comment on notation is in order. The tensor product (over $\Z$) of $\A_k^L$ and $\A_p^L$ is a left $(\A_k\otimes\A_p)$-module with canonical basis $\{a_{i\ast}\otimes a_{j\ast}\}$. By $\Phi_\alpha^L\otimes I_p$ we mean the matrix in this basis for the $(\A_k\otimes\A_p)$-linear map equal to the tensor product of the map corresponding to $\Phi_\alpha^L$ with the identity on $\A_p^L$. Similarly for $\A_k^R$ and $\A_p^R$.

Proposition \ref{psiofbp} hinges on the following lemma, proved in Section \ref{PropAndLemmas}.

\begin{lem}\label{commutes} For $\alpha\in B_k$ the following diagram commutes.

  \[\begin{tikzpicture}[scale=.75,>=stealth]
    \foreach \m in {0,3.5}
      \draw (\m,2) node {\footnotesize $\A_{kp}^L$};
    \draw (.65,0) node[left] {\footnotesize $\A_k^L\otimes\A_p^L$}
          (2.85,0) node[right] {\footnotesize $\A_k^L\otimes\A_p^L$};
    \draw[->] (.7,2) -- node[above] {\footnotesize $\phi_{\subpp\alpha}^\ast$} (2.8,2);
    \draw[->] (.7,0) -- node[below] {\footnotesize $\phi_\alpha^\ast\otimes\id$} (2.8,0);
    \foreach \p in {0,3.5}
      \draw[->] (\p,1.5) -- node[left] {\footnotesize $\psi^\ast$} (\p,.5);
  \end{tikzpicture}\]

\noindent In particular, $\psi^\ast(\phi_{\subpp\alpha}(a_{i,\ast})) = (\phi_{\alpha} \otimes \id)(\psi^\ast(a_{i,\ast}))$ for any $1\le i\le kp$.
\end{lem}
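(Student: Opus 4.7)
The plan is to prove the statement by induction on the length of $\alpha$ as a word in the Artin generators $\sigma_j^{\pm 1}$, simultaneously establishing the strengthening $\psi\circ\phi_{\pp\alpha} = (\phi_\alpha\otimes\id)\circ\psi$ on all of $\A_{kp}$. This stronger statement is needed in the inductive step because $\psi^\ast$ is only semilinear (over $\psi$) with respect to the $\A_{kp}$-module structure on $\A_{kp}^L$, so composing the actions of two braids forces us to track how the algebra-level action on coefficients in $\A_{kp}$ interacts with $\psi$. The base case $\alpha = 1_k$ is immediate, and since the parallel-copies operation is a homomorphism $B_k\to B_{kp}$, one has $\pp{\alpha'\sigma_j^{\pm1}} = \pp{\alpha'}\,\pp{\sigma_j^{\pm1}}$, so the functoriality $\phi_{\beta_1\beta_2} = \phi_{\beta_1}\circ\phi_{\beta_2}$ reduces the inductive step to the case $\alpha = \sigma_j^{\pm1}$.

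For this single-generator case, I would work with the spanning arc description from Section \ref{SecSpanArcs}. The braid $\pp{\sigma_j} \in B_{kp}$ is isotopic (rel $P$) to a rigid half-twist that exchanges the $j$-th and $(j+1)$-th clusters of $p$ consecutive points, fixes every point outside these two clusters, and preserves the internal order within each of the two swapped clusters. For each generator $a_{il}$ (and similarly $a_{i,\ast}$), I compute $\phi_{\pp{\sigma_j}}(a_{il}) = \chi(\pp{\sigma_j}\cdot c_{il})$ by drawing the image arc and applying the relation of Figure \ref{FigRelnPathAlg} at each point of $P$ that the arc crosses transversally. The resulting polynomial in the $a_{mn}$ is then compared to $(\phi_{\sigma_j}\otimes\id)(\psi(a_{il}))$.

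The main obstacle is the case analysis, organized by the cluster indices $q_i, q_l$ relative to $\{j,j+1\}$ and, when the arc is not forced to be within-cluster, by whether the positions $r_i, r_l$ coincide, differ in the same ``direction'' as the cluster indices, or in the opposite direction. What makes each case routine is that $\pp{\sigma_j}$ preserves the internal ordering within each cluster, so any sub-arc of $\pp{\sigma_j}\cdot c_{il}$ contained in a single cluster is a parallel transport with no re-indexing of the $r$-coordinate. This exactly matches the four-case definition of $\psi$ in (\ref{defn:psi}): the vanishing case $(q_i-q_l)(r_i-r_l) < 0$ arises precisely because no spanning arc of the forbidden geometric type is produced by $\pp{\sigma_j}\cdot c_{il}$, while the other cases produce the predicted cluster/position tensor decomposition.

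To conclude the inductive step, assume both the algebra and module statements hold for $\alpha'$. For $\alpha = \alpha'\sigma_j^{\pm 1}$ and any basis element $a_{i,\ast}$, expand
\[\phi^\ast_{\pp\alpha}(a_{i,\ast}) = \phi^\ast_{\pp{\alpha'}}\bigl(\phi^\ast_{\pp{\sigma_j^{\pm1}}}(a_{i,\ast})\bigr),\]
apply $\psi^\ast$, and use the single-generator module identity together with the algebra-level inductive hypothesis (needed to pull $\psi$ past the outer $\phi_{\pp{\alpha'}}$ acting on the $\A_{kp}$-coefficients produced at the first step). The analogous manipulation with $a_{il}$ in place of $a_{i,\ast}$ handles the algebra identity, closing the simultaneous induction.
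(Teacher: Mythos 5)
Your proposal follows essentially the same route as the paper: reduce to a single Artin generator using that $\alpha\mapsto\pp{\alpha}$ and $\beta\mapsto\phi^\ast_\beta$ are homomorphisms, carry the algebra-level identity $\psi\circ\phi_{\subpp{\alpha}}=(\phi_\alpha\otimes\id)\circ\psi$ along with the module-level one because $\psi^\ast$ is only semilinear over $\psi$ (the paper does this by verifying the generator case on all $a_{ij}$ with $j\le kp$ as well as $j=\ast$), and settle the generator case by expanding $\pp{\s_n}\cdot c_{ij}$ with the relation of Figure \ref{FigRelnPathAlg} and comparing with $\psi$ (the content of Lemmas \ref{Sigma_n} and \ref{SimplifiedImages}). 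One caveat about the case analysis you defer as routine: your prediction that the vanishing case $(q_i-q_l)(r_i-r_l)<0$ holds because no arc of the forbidden type appears in the expansion is not how that case actually resolves. For $i\in X_n^{(p)}$, $j\in X_{n+1}^{(p)}$ with $r_j<r_i$, the image $\psi(\phi_{\subpp{\s_n}}(a_{ij}))$ reduces to $\psi(-a_{i+p,j-p}+a_{i+p,i}a_{i,j-p})$, and both monomials survive $\psi$ individually with common value $a_{n+1,n}\otimes a_{r_i,r_j}$; the zero arises from cancellation of two nonzero terms, not from termwise annihilation. This does not invalidate your method, since executing the computation still yields zero, but it is the one spot where the mechanism differs from what you describe.
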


\begin{proof}[Proof of Proposition \ref{psiofbp}]
The proposition readily follows from Lemma \ref{commutes}. Fixing $\alpha\in B_{k}$ and $1\le i\le kp$, we have

\begin{align*}
\left(\sum_{l=1}^k \left(\Phi_\alpha^L\right)_{q_il}a_{l*}\right)\otimes a_{r_i*} &= \left(\phi^\ast_{\alpha}\otimes \id\right)\psi^\ast\left(a_{i*}\right)\\
&= \psi^\ast\left(\phi^\ast_{\subpp{\alpha}}\left(a_{i*}\right)\right)\\
&= \sum_{j=1}^{kp} \psi\left(\left(\Phi_{\subpp{\alpha}}^L\right)_{ij}\right)\left(a_{q_j*}\otimes a_{r_j*}\right).\\
\end{align*}
Hence $\psi((\Phi_{\subpp{\alpha}}^L)_{ij}) = 0$ if $r_i \ne r_j$ and $\psi((\Phi_{\subpp{\alpha}}^L)_{ij}) = (\Phi_\alpha^L)_{q_iq_j}\otimes 1$ if $r_i = r_j$, since for each $1\le l\le k$ exactly one $j$ satisfies both $r_j=r_i$ and $q_j=l$. We conclude $\psi(\Phi_{\subpp{\alpha}}^L) = \Phi_\alpha^L\otimes I_p$. Since $\Phi_\alpha^R = \overline{\Phi_\alpha^L}^t$ and $\psi(\overline{a_{ij}}) = \overline{\psi(a_{ij})}$, we have $\psi(\Phi_{\subpp{\alpha}}^R) = \Phi_\alpha^R\otimes I_p$ as well.
\end{proof}

\begin{proof}[Proof of Theorem \ref{main}]
By Theorem \ref{thm:RanknAugs} there exist augmentations $\epsilon_k\colon \A_k\otimes R_0 \rightarrow \C$ and $\epsilon_p\colon \A_p\otimes R_0 \rightarrow \C$, for the closures of $\alpha,\gamma$ respectively, such that $\epsilon_k\left(\Phi_\alpha^L\right) = \epsilon_k\left(\Phi_\alpha^R\right) = \Delta(\alpha)$ and $\epsilon_p\left(\Phi_{\gamma}^L\right) = \epsilon_p\left(\Phi_{\gamma}^R\right) = \Delta(\gamma)$. Theorem \ref{thm:RanknAugs} also implies that it suffices to prove that there exists an augmentation $\epsilon\colon \A_{kp}\otimes R_0\rightarrow \C$ such that $\epsilon\left(\Phi_{\gamma\subsmallp(\alpha\subsmallp)}^L\right) = \epsilon\left(\Phi_{\gamma\subsmallp(\alpha\subsmallp)}^R\right) = \Delta(\gamma\smallp(\alpha\smallp))$.

Below we will define a homomorphism $\delta\colon\A_p\rightarrow \C$ such that for each generator $a_{ij}$ we have $\delta(a_{ij}) = \pm \epsilon_p(a_{ij})$, the sign depending on the parity of $w(\alpha)$ and $p$. Let $\pi\colon \C\otimes \C \rightarrow \C$ be the multiplication $a\otimes b\mapsto ab$. Our desired map is defined by $\epsilon = \pi\circ(\epsilon_k\otimes\delta)\circ\psi$.

The Chain Rule theorem gives that
\begin{equation}
\pi\circ(\epsilon_k\otimes\delta)\circ\psi\left(\Phi_{\gamma\subsmallp(\alpha\subsmallp)}^L\right) = \pi\circ(\epsilon_k\otimes\delta)\psi\left(\phi_{\subpp\alpha}\left(\Phi_{\bar\gamma}^L\right)\right)\psi\left(\Phi_{\subpp\alpha}^L\right)
\label{eqn1MainPf}
\end{equation}

Consider how the homomorphism $\phi_{\subpp\alpha}$ acts on spanning arcs. For $1\le i\ne j\le p$, since the points $\{1,\ldots,p\}\in D_{kp}$ are moved as one block by the action of $\pp\alpha$, there is an $0\le m<k$ so that $\phi_{\subpp\alpha}(a_{ij})=a_{i+mp,j+mp}$. As $\psi(a_{i + mp, j+mp})=1\otimes a_{ij}$,

$$\psi\left(\phi_{\subpp\alpha}\left(\Phi_{\bar\gamma}^L\right)\right) = \left(1\otimes \left(\Phi_{\bar\gamma}^L\right)_{ij}\right).$$

Note that while the entries of $\Phi_{\bar\gamma}^L$ are elements of $\A_{kp}$, all of them lie in the image of the natural inclusion of $\A_p$ into $\A_{kp}$, so we regard the entries of the matrix on the right hand side as elements of $\A_k\otimes \A_p$. Returning to the right hand side of (\ref{eqn1MainPf}), by Proposition \ref{psiofbp} we have

\begin{align*}
\pi\circ(\epsilon_k\otimes\delta)\left(\psi\left(\phi_{\subpp\alpha}\left(\Phi_{\bar\gamma}^L\right)\right)\psi\left(\Phi_{\subpp\alpha}^L\right)\right)
    & = \pi\circ(\epsilon_k\otimes\delta)\left(\left(1\otimes \left(\Phi_{\bar\gamma}^L\right)_{ij}\right)\left(\Phi_\alpha^L\otimes I_p\right)\right)\\
    & = \delta\left(\Phi_{\bar\gamma}^L\right)\pi\left(\Delta(\alpha)\otimes I_p\right).
\end{align*}

We are done if we define $\delta$ so that $\delta\left(\Phi_{\bar\gamma}^L\right)\pi\left(\Delta(\alpha)\otimes I_p\right) = \Delta(\gamma\smallp(\alpha\smallp))$.  When $w(\alpha)$ is even $w(\pp\alpha)$ is also, and further $\Delta(\alpha)=I_k$. Letting $\delta = \epsilon_p$ makes 
\begin{equation*}
\delta\left(\Phi_{\bar\gamma}^L\right)\pi\left(\Delta(\alpha)\otimes I_p\right) = \epsilon_p\left(\Phi_{\bar\gamma}^L\right) = \Delta(\bar\gamma) = \Delta(\gamma\smallp(\alpha\smallp)).
\end{equation*}

Suppose $w(\alpha)$ is odd. Define $g\colon \{1,\ldots,p\}\rightarrow \{\pm 1\}$ as follows. Let $x_1 = 1$, and $x_l = \perm(\bar\gamma)(x_{l-1})$ for $1<l\le p$. Since the first $p$ strands of $\bar\gamma$ close to a knot, $\perm(\bar\gamma)$ is given by the $p$-cycle $(x_1x_2\ldots x_p)$. If $p$ is even, we let $g(x_1) = 1$, and $g(x_l) = -g(x_{l-1})$ for $1<l\le p$. If $p$ is odd, let $g(x_1) = g(x_2) = 1$ and $g(x_l) = -g(x_{l-1})$ for $2<l\le p$.

 Define $\delta: \A_p\to \C$ by setting $\delta(a_{ij}) = g(i)g(j)\epsilon_p(a_{ij})$ for $1\le i\ne j\le p$. Fix $i,j$ and consider a monomial $M$ of $\left(\Phi_{\bar\gamma}^L\right)_{ij}$, which is constant if $i>p$ or $j>p$. For $i,j\le p$, writing $i_0=\text{perm}(\bar\gamma)(i)$, Proposition \ref{lem:monomial} implies $M=c_{ij}a_{i_0,j_1}a_{j_1,j_2}\ldots a_{j_m,j}$ for some $j_1,\ldots j_m\in \{1,\ldots,p\}$, possibly being constant if $i_0=j$, implying that 

$$\delta(M) = g(i_0)g(j)\left(\prod_{k=1}^m g(j_k)^2\right)\epsilon_p(M) = g(i_0)g(j)\epsilon_p(M).$$

\noindent For $M$ a constant, $\delta(M) = M = g(i_0)g(j)\epsilon_p(M)$ since $i_0=j$. This holds for each monomial, thus

$$\delta\left(\left(\Phi_{\bar\gamma}^L\right)_{ij}\right) = g(i_0)g(j)\epsilon_p\left(\left(\Phi_{\bar\gamma}^L\right)_{ij}\right).$$

When $p$ is even, $w(\pp\alpha)$ is also even and so the opposite parity of $w(\alpha)$. Our definition of $g$ gives $\delta\left(\left(\Phi_{\bar\gamma}^L\right)_{ii}\right) = -\epsilon\left(\left(\Phi_{\bar\gamma}^L\right)_{ii}\right)$ for $i\le p$. Thus 

$$\delta\left(\Phi_{\bar\gamma}^L\right) = 
\left( \begin{array}{ccc}
(-1)^{w(\bar\gamma)+1} & 0 & 0 \\
0 & -I_{p-1} & 0 \\
0 & 0 & I_{(k-1)p} \end{array} \right)
$$
\noindent and therefore
$$
\delta\left(\Phi_{\bar\gamma}^L\right)\left(\Delta(\alpha)\otimes I_p\right) = \diag[(-1)^{w(\alpha) + w(\bar\gamma) + 1},1\ldots 1] = \Delta(\gamma\smallp(\alpha\smallp))
$$
\noindent as desired.

When $p$ is odd, $w(\pp\alpha)$ is odd and therefore the same parity of $w(\alpha)$. Our definition of $g$ gives that $\delta\left(\left(\Phi_{\bar\gamma}^L\right)_{11}\right) = \epsilon\left(\left(\Phi_{\bar\gamma}^L\right)_{11}\right)$ and $\delta\left(\left(\Phi_{\bar\gamma}^L\right)_{ii}\right) = -\epsilon\left(\left(\Phi_{\bar\gamma}^L\right)_{ii}\right)$ for $1<i\le p$, so 

$$\delta\left(\Phi_{\bar\gamma}^L\right) = 
\left( \begin{array}{ccc}
(-1)^{w(\bar\gamma)} & 0 & 0 \\
0 & -I_{p-1} & 0 \\
0 & 0 & I_{(k-1)p} \end{array} \right)
$$
\noindent and therefore
$$
\delta\left(\Phi_{\bar\gamma}^L\right)\left(\Delta(\alpha)\otimes I_p\right)= \diag[(-1)^{w(\alpha) + w(\bar\gamma)},1\ldots 1] = \Delta(\gamma\smallp(\alpha\smallp))
$$
\noindent as desired. 

There is little difference in the proof that $\epsilon(\Phi_{\gamma\subsmallp(\alpha\subsmallp)}^R) = \Delta(\gamma\smallp(\alpha\smallp))$, except that monomials in $(\Phi_{\bar\gamma}^R)_{ij}$ are of the form $c_{ij}a_{i,j_1}a_{j_1,j_2}\cdots a_{j_k,j'}$ where $j'=\text{perm}(\bar\gamma)(j)$. Applying Theorem \ref{thm:RanknAugs} now completes the proof.
\end{proof}

\begin{proof}[Proof of Corollary \ref{cor:iteratedCables}]
We prove the corollary by induction on the dimensions of the vectors ${\bf p}$ and ${\bf q}$.  If ${\bf p}$ and ${\bf q}$ have one entry, then $T({\bf p},{\bf q})$ is simply the $(p_1,q_1)$-torus knot, and by Theorem 1.3 from \cite{Cor13b} we have $\ar(T({\bf p},{\bf q})) = p_1$.

Suppose that ${\bf p}$ and ${\bf q}$ have $n$ entries and $\ar(T({\bf \hat p},{\bf \hat q})) = p_1p_2\cdots p_{n-1}$.  Choose a braid $\alpha\in B_{p_1p_2\cdots p_{n-1}}$ such that $\hat\alpha = T({\bf \hat p},{\bf \hat q})$, and let $\gamma = (\sigma_1\ldots \sigma_{p_n-1})^{q_n}$.  Theorem 1.3 from \cite{Cor13b} implies that $\ar(\gamma) = p_n$, and since $T({\bf \hat p},{\bf \hat q}) = K(\alpha,\gamma)$, Theorem \ref{main} gives the desired result.
\end{proof}

\subsection{Supporting Lemmas}
\label{PropAndLemmas} 

\comment{
Figure \ref{FigCommutes} demonstrates an example for Lemma \ref{commutes}, showing that $\psi(\phi_{\subpp[2]{\s_2}}(a_{24})) = \phi_{\s_2}\otimes\id(\psi(a_{24}))$.  Note that in the figure we condense elements such as $a_{13}\otimes 1$ to $a_{13}$ and include products of algebra elements on a single set of points in order to make the notation cleaner.

\begin{figure}[ht]
\begin{tikzpicture}[scale=0.245,>=stealth]
\foreach \y in {0}
  \foreach \p in {0,1,3,4,6,7} \filldraw (\p,\y) circle (1.5pt);
  
\foreach \a in {3.5}
  \foreach \b in {-3.5,-2.5,-.5,.5,2.5,3.5} \filldraw (\a+\b,-3) circle (1.5 pt);
  
\foreach \a in {3.5,15.5,27.5,39.5}
  \foreach \b in {-3.5,-2.5,-.5,.5,2.5,3.5} \filldraw (\a+\b,-6) circle (1.5 pt);

\draw (-3.2,0) node {{\Large $\psi(\phi_{\subpp[2]{\s_2}}($}};
\draw (8,0) node {{\Large $))$}};

\oarc{1}{0}{4}{0};

\draw[->,thick] (1,-3) ..controls (1.5,-3.75) and (4.5,-3.75) .. (5,-3)
                  ..controls (5.5,-2.25) and (6.5,-2.25) .. (7,-3);

\draw (-3.2,-3) node {{$=$}};
\draw (-1.2,-3) node {{\Large $\psi($}};
\draw (7.7,-3) node {{\Large $)$}};

\draw (-3.2,-6) node {{$=$}};
\draw (-1.2,-6) node {{\Large $\psi($}};
\draw (44,-6) node {{\Large $)$}};

\oarc{1}{-6}{3}{-6};
\oarc{3}{-6}{4}{-6};
\oarc{4}{-6}{7}{-6};

\oarc{13}{-6}{16}{-6};
\oarc{16}{-6}{19}{-6};

\draw (9.5,-6) node {{$-$}};
\draw (21.5,-6) node {{$-$}};

\oarc{25}{-6}{27}{-6};
\oarc{27}{-6}{31}{-6};

\oarc{37}{-6}{43}{-6};


\draw (-3.2,-9) node {{$=$}};

\foreach \x in {3.5,13.5}
  \foreach \c in {-1,0,1} \filldraw (\x + \c,-9) circle(1.5pt);

\draw (-1.5,-9) node {{0}};
\draw (1,-9) node {{$-$}};

\oarc{2.5}{-9}{3.5}{-9};
\oarc{3.5}{-9}{4.5}{-9};

\draw (6,-9) node {{$-$}};

\draw (8.5,-9) node {{0}};

\oarc{12.5}{-9}{14.5}{-9};

\draw (11,-9) node {{$+$}};

\draw (-3.2,-12) node {{$=$}};
\draw (-.5,-12) node {{\Large $\phi_{\s_2}($}};
\draw (3.9,-12) node {{\Large $)$}};

\foreach \x in {1.2,2.2,3.2} \filldraw (\x,-12) circle(1.5pt);

\oarc{1.2}{-12}{2.2}{-12};

\end{tikzpicture}
\caption{Computing $\psi(\phi_{\subpp{\s_2}}(a_{24}))$}
\label{FigCommutes}
\end{figure}

}

In this section we prove Lemma \ref{commutes} for which we make some definitions.  Set $X_{m,l} = \{m,m+1,\ldots,m+l-1\}$ for any $m,l>0$. For a given $Y\subseteq X_{m,l}$ we denote elements of $Y$ by $\{y_1,\ldots,y_k\}$ so that $y_1<\ldots <y_k$.  Suppose $1\le i\ne j\le kp+1$. If $i,j\not\in X_{m,l}$ we define
\begin{align*}
   A(i,j,X_{m,l})   &= \sum_{Y\subseteq X_{m,l}}(-1)^{|Y|}a_{iy_1}a_{y_1y_2}\cdots a_{y_kj};\\
   A'(i,j,X_{m,l})  &= \sum_{Y\subseteq X_{m,l}}(-1)^{|Y|}a_{iy_k}a_{y_ky_{k-1}}\cdots a_{y_1j}.
 \end{align*}
If $j\in X_{m,l}$ and $i\not\in X_{m,l}$ define
$$
B'(i,j,X_{m,l}) = \sum_{Y\subseteq X_{m,l}, y_1\ne j}c_Ya_{iy_k}a_{y_ky_{k-1}}\cdots a_{y_1j}
$$

\noindent where $c_Y = (-1)^{|Y|+1}$ if $Y\cap X_{m,j-m+1} = \emptyset$, and $c_Y = (-1)^{|Y|}$ if $Y\cap X_{m,j-m}\ne\emptyset$ (the $y_1\ne j$ condition makes this the complement of the first condition). To prove Lemma \ref{commutes} we use two lemmas. As explained in the proof of Lemma \ref{commutes}, it suffices to consider generators $a_{ij}$, $i<j$. Also, we write $\ast$ for $j=kp+1$. Recall the definition of the spanning arc $c_{ij}$ and the map $\chi:\mathscr S_{kp+1}\to\A_{kp}^L$ from Section \ref{SecSpanArcs}.

\begin{lem}\label{Sigma_n}
Given $1\le n\le k-1$ let $X_n^{(p)} = X_{(n-1)p+1,p}$. For $1\le i< j\le kp+1$ we have
$$
\phi_{\subpp{\s_n}}(a_{ij}) =
\begin{cases}
       a_{i+p,j+p} &\colon i,j\in X_n^{(p)}\\
       a_{i-p,j-p} & \colon i,j\in X_{n+1}^{(p)}\\
       B'(i+p,j-p,X_n^{(p)}) & \colon i\in X_n^{(p)}, j\in X_{n+1}^{(p)}\\
       a_{i-p,j} &\colon j > (n+1)p, i\in X_{n+1}^{(p)}\\
       a_{i,j-p} & \colon i\le(n-1)p, j\in X_{n+1}^{(p)}\\
       A(i,j+p,X_n^{(p)}) & \colon i\le(n-1)p, j\in X_n^{(p)}\\
       A'(i+p,j,X_n^{(p)}) & \colon j> (n+1)p, i\in X_n^{(p)}\\
       a_{ij} & \colon \textnormal{otherwise}
\end{cases}.
$$
\end{lem}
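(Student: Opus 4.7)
The plan is to compute $\phi_{\subpp{\s_n}}(a_{ij}) = \chi(\pp{\s_n}\cdot c_{ij})$ directly via the spanning arc formalism of Section \ref{SecSpanArcs}. The mapping class of $\pp{\s_n}$ is supported on a neighborhood of $X_n^{(p)}\cup X_{n+1}^{(p)}$ and realizes $p$ parallel positive crossings swapping the two blocks: $X_{n+1}^{(p)}$ passes up-and-over to become the new $X_n^{(p)}$, while $X_n^{(p)}$ goes down-and-around to become the new $X_{n+1}^{(p)}$. I would handle each of the eight cases by tracking where $c_{ij}$ is sent by this homeomorphism and reducing the image, when necessary, using the relation of Figure \ref{FigRelnPathAlg}.

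The first five cases should be immediate from tracking endpoints. When both $i,j$ lie in the same block, $c_{ij}$ translates rigidly with its block, giving $c_{i+p,j+p}$ or $c_{i-p,j-p}$. When one endpoint lies in $X_{n+1}^{(p)}$ and the other on the far side of the two blocks (cases 4 and 5), $c_{ij}$ can be isotoped to remain in the upper half-disk throughout the swap, mapping to $c_{i-p,j}$ or $c_{i,j-p}$ respectively. The \emph{otherwise} case covers arcs entirely outside the support of $\pp{\s_n}$.

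Cases 6 and 7 are the winding cases with one endpoint in $X_n^{(p)}$. In case 6, with $i\le(n-1)p$ and $j\in X_n^{(p)}$, the endpoint $j$ is dragged down-and-around to position $j+p$, so the image arc from $i$ to $j+p$ must wind across the $p$ strands now occupying $X_n^{(p)}$. Applying Figure \ref{FigRelnPathAlg} at a strand $y\in X_n^{(p)}$ replaces the winding there by a direct-pass arc minus an arc broken at $y$, contributing a factor $-1$ per break. Iterating over all $p$ strands yields a sum over subsets $Y\subseteq X_n^{(p)}$ with sign $(-1)^{|Y|}$ and reading order $a_{iy_1}a_{y_1y_2}\cdots a_{y_k,j+p}$, matching $A(i,j+p,X_n^{(p)})$. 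Case 7 is mirror-symmetric and gives $A'$.

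The main obstacle will be case 3, in which both endpoints of $c_{ij}$ move to opposite sides of the swap region. Here the image arc runs from $i+p\in X_{n+1}^{(p)}$ back to $j-p\in X_n^{(p)}$, and the endpoint $j-p$ itself lies in the block being wound across, effectively splitting $X_n^{(p)}$ into two parts relative to which the arc winds oppositely. Iterating Figure \ref{FigRelnPathAlg} at each $y\in X_n^{(p)}$ again yields a sum over subsets $Y$, but now with the modified sign rule $c_Y$ (distinguishing whether $Y$ meets $X_{m,j-m}$ or not) and the exclusion $y_1\ne j-p$, since the decomposition cannot break the arc at its own endpoint; the reversed reading order $a_{iy_k}a_{y_ky_{k-1}}\cdots a_{y_1,j}$ reflects that the image arc is traversed from right to left. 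The technical crux will be drawing the image of $\pp{\s_n}\cdot c_{ij}$ in case 3 carefully enough to read off the correct global sign and ordering, after which the identification with $B'(i+p,j-p,X_n^{(p)})$ is a direct comparison.
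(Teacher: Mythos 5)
Your route is genuinely different from the paper's. You propose to act with the full half-twist $\pp{\s_n}$ on the spanning arc $c_{ij}$ in one step and then resolve the resulting winding arc by iterating the relation of Figure \ref{FigRelnPathAlg} at the points of $X_n^{(p)}$. The paper instead factors $\pp{\s_n}=\t_{(n-1)p+p,p}\cdots\t_{(n-1)p+1,p}$ into $p$ cycles, computes $\phi_{\t_{m,p}}$ geometrically (formula (\ref{tau}), where only a single point travels through the upper half-disk, so at most one application of the relation is ever needed), and then proves the stronger intermediate statement (\ref{kappa}) for every partial product $\kappa_{m,l}$ by induction on $l$. Your case analysis is consistent with the lemma: the identification of which endpoint travels up-and-over (trivial cases) versus down-and-around (the $A$, $A'$, $B'$ cases) matches the counter-clockwise convention and correctly predicts which of the eight cases are ``simple.'' What the paper's factorization buys is that the subset sums, the reading orders, and in particular the sign rule $c_Y$ in $B'$ all emerge mechanically from the algebraic induction (the two regimes of $c_Y$ correspond exactly to the two sub-cases $j-m-p=l-1$ and $j-m-p<l-1$ in the inductive step), rather than having to be read off a single intricate picture.

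That said, there is a gap in execution at precisely the point you flag as the crux. In case 3 the statement ``the identification with $B'(i+p,j-p,X_n^{(p)})$ is a direct comparison'' is doing all the work: the sign rule $c_Y$ (an overall extra $-1$ that is present exactly when $Y$ lies entirely above $j-p$, together with the exclusion $y_1\ne j-p$, which excludes subsets whose \emph{minimum} is $j-p$, not subsets containing $j-p$) is the one piece of the lemma that cannot be guessed from the easy cases, and your heuristic about the arc ``winding oppositely'' relative to the two parts of $X_n^{(p)}$ is not a derivation of it. A similar, smaller issue arises in cases 6 and 7: you need to justify that the image arc dips below the points of $X_n^{(p)}$ only, and never below any point of $X_{n+1}^{(p)}$ on its way to $j+p$ (respectively from $i+p$), since otherwise extra break points would appear in the sum. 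To turn your sketch into a proof you would either need to draw and carefully resolve the image of $c_{ij}$ under the full half-twist in these three cases, or fall back on a factorization such as the paper's so that each elementary resolution involves only one point.
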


\begin{proof}
Define $\t_{m,l} = \s_m\s_{m+1}\cdots\s_{m+l-1}$ and let $\kappa_{m,l} = \t_{m+l-1,p}\t_{m+l-2,p}\cdots\t_{m,p}$. Note that $\kappa_{m,p}=\pp{\s_n}$ if $m=(n-1)p+1$. We may prove the result, therefore, by showing that for $i<j$ if $l\le p$ then

\begin{equation}\label{kappa}
\phi_{\kappa_{m,l}}(a_{ij}) =
  \begin{cases}
       a_{i+l,j+l} &\colon i,j\in X_{m,p}\\
       a_{i-p,j-p} & \colon i,j\in X_{m+p,l}\\
       B'(i+l,j-p,X_{m,l}) & \colon i\in X_{m,p}, j\in X_{m+p,l}\\
       a_{i-p,j} &\colon j\ge m+l+p, i\in X_{m+p,l}\\
       a_{i,j-p} & \colon i<m, j\in X_{m+p,l}\\
       A(i,j+l,X_{m,l}) & \colon i<m, j\in X_{m,p}\\
       A'(i+l,j,X_{m,l}) & \colon j\ge m+p+l, i\in X_{m,p}\\
       a_{ij} & \colon \textnormal{otherwise}
  \end{cases}.
\end{equation}

The proof of (\ref{kappa}) is by induction on $l$. For the case $l=1$, note that $\kappa_{m,1}=\t_{m,p}$. It is relatively straightforward to calculate, for $1\le m \le (k-1)p$ and $i<j$, that
\begin{equation}\label{tau}
\phi_{\t_{m,p}}(a_{ij}) =
  \begin{cases}
       a_{i+1,j+1} & \colon m\le i<j< m+p\\
       -a_{i+1,m} & \colon m\le i < j = m+p\\
       a_{mj} & \colon m+p=i<j\\
       a_{im} & \colon i<m<m+p=j\\
       a_{i,j+1}-a_{i,m}a_{m,j+1} & \colon i<m\le j< m+p\\
       a_{i+1,j}-a_{i+1,m}a_{m,j} & \colon m\le i< m+p<j\\
       a_{ij} & \colon \textnormal{otherwise}
  \end{cases}.
\end{equation}

  \begin{figure}[ht]
      \begin{tikzpicture}[scale=.75,>=stealth]
        \foreach \c in {3,10} \draw (\c,6) circle (60pt);
        \foreach \c in {3,10} \filldraw (\c-1.25,6) circle (1.5pt);
        \filldraw (4,6) circle (1.5pt);
        \filldraw (9.55,6) circle (1.5pt);
        \filldraw (3.15,6) circle (1.5pt)
                  (10.4,6) circle (1.5pt);
        \foreach \c in {3,10}  
          \draw   (\c+0.6,7.75) node {{\small $D$}};
        \draw[<-,thick] 
            (3.15,6)  ..controls (3.15,6.75) and (1.75,6.75) .. (1.75,6);
        \draw[->,thick] 
            (8.75,6)  ..controls (8.75,6.4) and (9.4,6.4) .. (9.4,6)
                     ..controls (9.4,5.8) and (9.7,5.8) .. (9.7,6)
                     ..controls (9.7,6.4) and (10.4,6.4) ..(10.4,6);
        \draw (10-.45,6) node[below] {{\tiny $m$}};
        \draw (3+1,6) node[below] {{\tiny $m+p$}};        
        \foreach \c in {3,10}
          \draw (\c-1.25,6) node[below] {{\tiny $i$}};
        \draw (3.15,6) node[below] {{\tiny $j$}}
              (10.4,6) node[below] {{\tiny $j+1$}};
        \draw[->,dashed]  (4,6) ..controls (4,6.15) and (3.75,6.27) ..(3.6,6.3);
        \draw[->,dashed]  (5.5,6) -- node[below] {\footnotesize $\tau_{m,p}$} (7.5,6);
        \foreach \c in {3,10} \draw (\c,1) circle (60pt);
        \foreach \c in {3,10} \filldraw (\c+1.45,1) circle (1.5pt);
        \filldraw (3.5,1) circle (1.5pt);
        \filldraw (8.65,1) circle (1.5pt);
        \filldraw (2.25,1) circle (1.5pt)
                  (9.5,1) circle (1.5pt);
        \foreach \c in {3,10}  
          \draw   (\c+0.6,2.75) node {{\small $D$}};
        \draw[->,thick] 
            (2.25,1)  ..controls (2.25,1.85) and (4.45,1.85) .. (4.45,1);
        \draw[->,thick] 
            (9.5,1)  ..controls (9.5,1.3) and (8.8,1.3) .. (8.8,1)
                     ..controls (8.8,.8) and (8.5,.8) .. (8.5,1)
                     ..controls (8.5,1.85) and (10.75,1.85) ..(11.45,1);
        \draw (10-1.35,1) node[below] {{\tiny $m$}};
        \draw (3+.5,1) node[below] {{\tiny $m+p$}};        
        \foreach \c in {3,10}
          \draw (\c+1.45,1) node[below] {{\tiny $j$}};
        \draw (2.25,1) node[below] {{\tiny $i$}}
              (9.5,1) node[below] {{\tiny $i+1$}};
        \draw[->,dashed]  (3.5,1) ..controls (3.5,1.15) and (3.25,1.27) ..(3.1,1.3);
        \draw[->,dashed]  (5.5,1) -- node[below] {\footnotesize $\tau_{m,p}$} (7.5,1);
      \end{tikzpicture}
      \caption{$\tau_{m,p}\cdot c_{ij}$, two possible cases}
      \label{FigTauCalc}
  \end{figure}
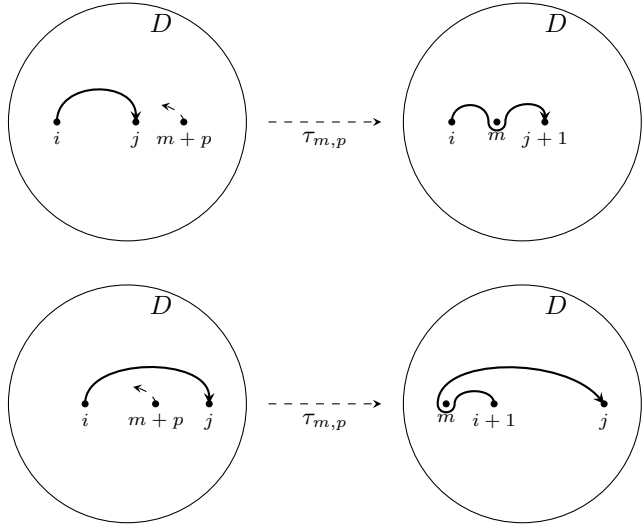

Indeed, the effect of $\tau_{m,p}$ is to move points $\{m,\ldots,m+p-1\}$ in $(D,P)$ one place to the right and the point at $m+p$ is carried through the upper half-disk to $m$. Figure \ref{FigTauCalc} shows $\t_{m,p}\cdot c_{ij}$ for two interesting cases in (\ref{tau}). Use the relation in Figure \ref{FigRelnPathAlg} at the point $m$, we get $\t_{m,p}\cdot c_{ij} = c_{i,j+1}-c_{im}c_{m,j+1}$ if $i<m\le j<m+p$, and $\t_{m,p}\cdot c_{ij} = c_{i+1,j}+c_{i+1,m}c_{mj}$ if $m\le i<m+p<j$. Applying the map $\chi$ gives the calculation in (\ref{tau}) for these cases. Verification of the other cases are left to the reader.

Since $X_{m,1}=\{m\}$, we have $A(i,j+1,X_{m,1})=a_{i,j+1}-a_{im}a_{m,j+1}$ and $A'(i+1,j,X_{m,1})=a_{i+1,j}-a_{i+1,m}a_{mj}$. Also, when $j=m+p$ the subsets considered for $B'(i+1,j-p,X_{m,1})$ must be empty, so it is $-a_{i+1,m}$. The other cases clearly agree with (\ref{kappa}) for $l=1$, proving the base case.

The argument for $l>1$ is handled in each case appearing in (\ref{kappa}).  We present the argument in the cases $i<m,j\in X_{m,p}$ and $j\ge m+p+l,i\in X_{m,p}$ and when $i\in X_{m,p}, j\in X_{m+p,l}$.

If $i<m, j\in X_{m,p}$ then

\begin{align*}
\phi_{\k_{m,l}}(a_{ij}) &= \phi_{\t_{m+l-1,p}}(\phi_{\k_{m,l-1}}(a_{ij}))\\
&= \sum_{{\scriptscriptstyle Y\subseteq \{m,\ldots,m+l-2\}}} (-1)^{|Y|} \phi_{\t_{m+l-1,p}}(a_{i,y_1}a_{y_1y_2}\cdots a_{y_k,j+l-1})\\
&= \sum_{{\scriptscriptstyle Y\subseteq \{m,\ldots,m+l-2\}}} (-1)^{|Y|} a_{iy_1}a_{y_1y_2}\cdots a_{y_{k-1}y_k}(a_{y_k,j+l}-a_{y_k,m+l-1}a_{m+l-1,j+l})\\
&= \sum_{{\scriptscriptstyle Y\subseteq \{m,\ldots,m+l-1\}}} (-1)^{|Y|} a_{i,y_1}a_{y_1y_2}\cdots a_{y_k,j+l}\\
&= A(i,j+l,X_{m,l}),
\end{align*}

The third equality uses (\ref{tau}) and holds because $l\le p$. 

The case $j\ge m+p+l,i\in X_{m,p}$ is very similar, but that the indices of generators appearing in the sum are descending, so we also use that $\phi_{\t_{m+l-1,p}}$ commutes with conjugation.

Finally, suppose $i\in X_{m,p},j\in X_{m+p,l}$. Note $j-(m+p)\le l-1$. If $j-m-p=l-1$, then by the preceding case

$$\phi_{\k_{m,l-1}}(a_{ij}) = A'(i+j-m-p,j,X_{m,j-m-p}).$$

We then have
{\small
\begin{align*}
&\phi_{\tau_{m + (j-m-p),p}}(A'(i+j-m-p,j,X_{m,j-m-p}))\\
&\quad =\sum_{{\scriptscriptstyle Y\subseteq \{m,\ldots,j - p - 1\}}} (-1)^{|Y|} \phi_{\t_{j-p,p}}(a_{i+j-m-p,y_k}a_{y_ky_{k-1}}\cdots a_{y_1,j})\\
&\quad = -a_{i+j-m-p+1,j-p}\\
&\quad + \sum_{{\scriptscriptstyle \overset{Y\subseteq \{m,\ldots,j - p - 1\}}{Y\ne\emptyset}}} (-1)^{|Y|} (a_{i+j-m-p+1,y_k} -a_{i+j+m-p+1,j-p}a_{j-p,y_k})a_{y_ky_{k-1}}\cdots a_{y_2y_1}a_{y_1,j-p}\\
&\quad = B'(i+l,j-p,X_{m,l}).
\end{align*}
}

If instead $j-m-p <l - 1$, and $l\le p$, we conclude the proof by checking
{\small
\begin{align*}
&\phi_{\tau_{m + l - 1,p}}(B'(i+l-1,j-p,X_{m,l-1})) \\
&\quad = \sum_{{\scriptscriptstyle \overset{Y\subseteq \{m,\ldots,m+l-2\}}{Y\cap X_{m,j-m-p}\ne\emptyset}}}(-1)^{|Y|}\phi_{\tau_{m + l - 1,p}}(a_{i+l-1,y_k}a_{y_ky_{k-1}}\cdots a_{y_1,j-p}) \\
&\qquad\quad - \sum_{{\scriptscriptstyle \overset{Y\subseteq \{m,\ldots,m+l-2\}}{Y\cap X_{m,j-m-p+1}=\emptyset}}}(-1)^{|Y|}\phi_{\tau_{m + l - 1,p}}(a_{i+l-1,y_k}a_{y_ky_{k-1}}\cdots a_{y_1,j-p}) \\
&\quad = \sum_{{\scriptscriptstyle \overset{Y\subseteq \{m,\ldots,m+l-2\}}{Y\cap X_{m,j-m-p}\ne\emptyset}}}(-1)^{|Y|}(a_{i+l,y_k} - a_{i+l,m+l-1}a_{m+l-1,y_k})a_{y_ky_{k-1}}\cdots a_{y_1,j-p} \\
&\qquad\quad - \sum_{{\scriptscriptstyle \overset{Y\subseteq \{m,\ldots,m+l-2\}}{Y\cap X_{m,j-m-p+1}=\emptyset}}}(-1)^{|Y|}(a_{i+l,y_k} - a_{i+l,m+l-1}a_{m+l-1,y_k})a_{y_ky_{k-1}}\cdots a_{y_1,j-p} \\
&\quad = B'(i+l,j-p,X_{m,l}). \qedhere
\end{align*}
}
\end{proof}

\begin{lem}\label{SimplifiedImages}
Fix $1\le i< j\le kp+1$ and define $\a_i = (n-1)p+r_i\in X_n^{(p)}$. We have the following equalities.
  \[\begin{array}{r l l} \psi(A(i,j+p,X_n^{(p)}))   &= \psi(a_{i,j+p} - a_{i\a_i}a_{\a_i,j+p})      & \colon i\le(n-1)p, j\in X_n^{(p)}\\
                  \psi^*(A'(i+p,j,X_n^{(p)})) &= \psi^*(a_{i+p,j} - a_{i+p,i}a_{i,j})       & \colon i\in X_n^{(p)}, j>(n+1)p\\
                \psi(B'(i+p,j-p,X_n^{(p)})) &= \psi(-a_{i+p,j-p} + \delta a_{i+p,i}a_{i,j-p}) & \colon i\in X_n^{(p)}, j\in X_{n+1}^{(p)},
  \end{array}\]
\noindent where $\delta\in\{-1,0,1\}$ is $0$ if $i=j-p$, and is the sign of $i-(j-p)$ otherwise.
\end{lem}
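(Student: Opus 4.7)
The plan is to prove each of the three identities by expanding the alternating sum on the left under $\psi$ monomial-by-monomial, and then collapsing it via a sign-reversing involution on the indexing subsets. For $Y=\{y_1<\cdots<y_k\}\subseteq X_n^{(p)}$, associate the $r$-support $T=\{r_{y_1}<\cdots<r_{y_k}\}\subseteq\{1,\ldots,p\}$, and use the involution $\iota\colon Y\mapsto Y\,\triangle\,\{\a_i\}$, which on $r$-supports becomes $T\mapsto T\,\triangle\,\{r_i\}$.

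For each monomial $M_Y$ appearing in $A$, $A'$, or $B'$, the definition of $\psi$ factors $\psi(M_Y)$ as an $\A_k$-constant (depending only on whether $Y=\emptyset$) tensored with an $\A_p$-polynomial $\pi(T)$. Here $\pi(T)$ is the product of consecutive generators $a_{s,s'}$ along the extended sequence obtained by prepending $r_i$ and appending $r_j$ to $T$ in ascending order (for (i)) or descending order (for (ii) and (iii)), with any adjacent pair $s=s'$ contributing $1$. The key structural observation is $\pi(T)=\pi(T\,\triangle\,\{r_i\})$ whenever both are defined, because toggling $r_i$ in $T$ only duplicates or un-duplicates the left endpoint $r_i$ of the extended sequence. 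I then verify that $\iota$ preserves the inequality conditions guaranteeing $\psi(M_Y)\ne 0$: $r_i\le\min T$ and $\max T\le r_j$ for (i); $\max T\le r_i$ and $\min T\le r_j$ for (ii); $\max T\le r_i$ and $\min T\ne r_j$ for (iii).

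Combined with the parity flip $(-1)^{|T|}\mapsto-(-1)^{|T|}$ under $\iota$, each pair $\{Y,\iota(Y)\}$ contributes zero to $\psi$ of the alternating sum, with the single exception of the canonical pair $\{\emptyset,\{\a_i\}\}$. A direct computation on these two monomials gives precisely the two terms on the right-hand side of each identity. When $j=\ast$ in (ii), the argument is unchanged, since $\psi^\ast(a_{i\ast})=a_{q_i,\ast}\otimes a_{r_i,\ast}$ behaves just like a generic $\psi$-value between a different $q$-block and $r$-row; alternatively, for $j\le kp$, identity (ii) can be obtained from (i) via the conjugation symmetry $A'(j,i,X)=\overline{A(i,j,X)}$ together with $\psi(\overline{a_{ij}})=\overline{\psi(a_{ij})}$.

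The main obstacle I anticipate is the sign bookkeeping in identity (iii). The sign $c_Y$ in $B'$ is piecewise, switching between $(-1)^{|Y|}$ when $\min T<r_j$ and $(-1)^{|Y|+1}$ when $\min T>r_j$, while the $y_1\ne j-p$ exclusion is exactly $\min T\ne r_j$. When $r_i\ne r_j$, toggling $r_i$ neither crosses the threshold $\min T=r_j$ nor places $r_j$ into or out of $T$, so $\iota$ preserves both the sign-case and the excluded set, and the parity flip in each pair survives. The degenerate case $r_i=r_j$ (equivalently $\a_i=j-p$) needs separate treatment: here $Y=\{\a_i\}$ is automatically excluded from the sum in $B'$, and correspondingly $\delta=0$ drops the second term on the right, so both sides lose exactly one of the canonical contributions.
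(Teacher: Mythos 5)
Your argument is correct and is essentially the paper's own proof in different clothing: the restriction to subsets with $r$-support bounded by $r_i$, followed by the pairing $Y\leftrightarrow Y\,\triangle\,\{\a_i\}$ using that $\psi$ of a monomial depends only on the $r$-support (together with whether $Y=\emptyset$), is exactly the paper's cancellation $\psi(a_{iy_1}-a_{i\a_i}a_{\a_iy_1})=0$ and its sign observation $c_Y=-c_{Y\cup\{\a_i\}}$ for the third case. The only quibble is your optional aside that (ii) follows from (i) by conjugation: $\overline{A(j,i+p,X_n^{(p)})}=A'(i+p,j,X_n^{(p)})$ has index ranges not covered by identity (i), so that shortcut does not literally apply, but your direct involution argument for (ii) is fine and nothing depends on the aside.
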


\begin{rem}It is possible to have $j=\ast$ only in the case that $j>(n+1)p$, hence the decoration $\psi^\ast$. This observation plays a role in Lemma \ref{commutes}.
\end{rem}
\begin{proof}[Proof of Lemma \ref{SimplifiedImages}] Each of the three cases involves a sum over subsets $Y\subseteq X_n^{(p)}$. 

In the case $i\le(n-1)p$, any $y_1<\a_i$ satisfies $r_{y_1}<r_i$ and $q_i<q_{y_1}$. Hence $\psi(a_{iy_1})=0$. Thus we restrict to subsets $Y\subseteq\{\a_i,\ldots,np\}$, i.e.
        \begin{align*}
        \psi(A(i,j+p,X_n^{(p)}))  &= \sum_{Y\subseteq \{\a_i,\ldots,np\}}(-1)^{|Y|}\psi(a_{iy_1}a_{y_1y_2}\cdots a_{y_k,j+p}).
        \end{align*}
For any $y_1\in\{\a_i+1,\ldots,np\}$ we get 
{\small
  \[\psi(a_{iy_1}-a_{i\a_i}a_{\a_iy_1}) = a_{q_i,q_{y_1}}\otimes a_{r_i,r_{y_1}}-(a_{q_i,n}\otimes 1)(1\otimes a_{r_i,r_{y_1}}) = 0,\]
}
\noindent and so 
{\small
        \begin{align*}
          &\psi(A(i,j+p,X_n^{(p)})                  & \\
          &\quad =\psi(a_{i,j+p}-a_{i\a_i}a_{\a_i,j+p}) &+\sum_{{\scriptscriptstyle \overset{Y\subseteq \{\a_i+1,\ldots,np\}}{Y\ne\emptyset}}}(-1)^{|Y|}\psi(a_{iy_1}-a_{i\a_i}a_{\a_iy_1})\psi(a_{y_1y_2}\cdots a_{y_k,j+p})\\
          &\quad =\psi(a_{i,j+p}-a_{i\a_i}a_{\a_i,j+p}).
        \end{align*}
}
In the remaining cases $i\in X_n^{(p)}$, and so $\a_i=i$. If $y_k>i$ then $r_{y_k}>r_{i+p}$ and $q_{i+p}>q_{y_k}$ so that $\psi(a_{i+p,y_k})=0$. Thus in these cases we restrict to $Y\subseteq\{(n-1)p+1,\ldots,i\}$. The argument for the second case proceeds analogously to the first.

In the third case, with $j\in X_{n+1}^{(p)}$, we must account for the condition $y_1\ne j-p$ in each summand. This causes the non-vanishing part of the sum to vary, depending on whether $i$ is larger than, equal to, or smaller than $j-p$. The $\delta$ in the statement of the lemma incorporates the three situations.  Noting that if $\emptyset \ne Y\subseteq\{(n-1)p+1,\ldots,i-1\}$ then $c_Y = -c_{Y\cup \{\a_i\}}$ (recall $\a_i=i$), the argument then proceeds analogously to the first.
\end{proof}

\begin{proof} [Proof of Lemma \ref{commutes}]
The statement holds when $\alpha$ is the identity braid. We prove for $1\le n < k$ that

$$\psi^\ast\circ\phi^\ast_{\subpp{\s_n}} = \left(\phi^\ast_{\s_n}\otimes \id\right)\circ\psi^\ast.$$

As the maps $B_{k} \to \text{Aut}(\A_k^L\otimes\A_p^L)$, given by $\alpha \mapsto \phi^\ast_\alpha\otimes\id$, and $B_k \to \text{Aut}(\A_{kp}^L)$, given by $\alpha \mapsto \phi^\ast_{\subpp\alpha}$, are homomorphisms, this suffices to prove the lemma.

Furthermore, for $\beta$ any braid, $\phi_\beta$ and $\psi$ both commute with conjugation, so we only need prove that 
      \begin{equation}\label{CommuteEqn}
      \psi^\ast(\phi^\ast_{\subpp{\s_n}})(a_{ij})=(\phi^\ast_\alpha\otimes\id)\psi^\ast(a_{ij})
      \end{equation}
for $i<j$, possibly $j=\ast$. We check (\ref{CommuteEqn}) for each case in the statment of Lemma \ref{Sigma_n}. 

In the first two cases both sides of (\ref{CommuteEqn}) equal $1\otimes a_{r_ir_j}$. 

When $j>(n+1)p, i\in X_{n+1}^{(p)}$, we could have $j=\ast$. Since $q_i=n+1$, we get $\psi^\ast(a_{i-p,\ast})=a_{q_i-1,\ast}\otimes a_{r_i,\ast}=(\phi^\ast_{\s_n}\otimes\id)\psi^\ast(a_{i,\ast})$. If $j\le kp$ then $\psi(a_{i-p,j}) = a_{q_i-1,q_j}\otimes x$ where $x=a_{r_ir_j}, 1$ or $0$ depending on the relation of $r_i$ to $r_j$. Again $q_i-1=n$, and $q_j>n+1$, so $a_{q_i-1,q_j}=\phi_{\s_n}(a_{q_iq_j})$, proving the statement. The case $i\le (n-1)p,j\in X_{n+1}^{(p)}$ is similar.

In the case that $\psi(a_{ij}) = A(i,j+p,X_n^{(p)})$ we have by Lemma \ref{SimplifiedImages} that
$$\psi(\phi_{\subpp{\s_n}}(a_{ij})) = \psi(a_{i,j+p}-a_{i\a_i}a_{\a_i,j+p}).$$

\noindent But since $q_i<q_{j+p} = n+1$ and $q_{\a_i}=n$, we see that
\begin{align*}
\psi(a_{i,j+p} - a_{i\a_i}a_{\a_i,j+p}) &= (a_{q_i,n+1} - a_{q_i,n}a_{n,n+1})\otimes x\\
&= (\phi_{\s_n} \otimes \id)(\psi(a_{ij})),
\end{align*}
\noindent where $x=a_{r_ir_j}$ if $r_i<r_j$, $x=1$ if $r_i=r_j$ and $x=0$ if $r_i>r_j$.

In the case that $\psi^\ast(a_{ij}) = A'(i+p,j,X_n^{(p)})$ (here $j=\ast$ is possible),
$$\psi^\ast(\phi_{\subpp{\s_n}}(a_{ij})) = \psi^\ast(a_{i+p,j}-a_{i+p,i}a_{ij}).$$

\noindent Then, as $q_{i+p} = n+1<q_j$ we get (replace $q_j$ with $\ast$ if $j=\ast$)
$$\psi^\ast(a_{i+p,j} - a_{i+p,i}a_{i,j}) = (a_{n+1,q_j} - a_{n+1,n}a_{n,q_j})\otimes x = (\phi^\ast_{\s_n} \otimes \id)(\psi^\ast(a_{ij})),$$
\noindent with $x$ either as before, or $x=a_{r_i,\ast}$ if $j=\ast$.

Finally, suppose $i\in X_n^{(p)},j\in X_{n+1}^{(p)}$. Then $q_{j-p}=q_i=n$ and $\a_i=i$. If $j-p<i$ then $r_j<r_i$. By Lemmas \ref{Sigma_n} and \ref{SimplifiedImages}
\begin{align*}
  \psi(\phi_{\subpp{\s_n}}(a_{ij})) = \psi(-a_{i+p,j-p}+\delta a_{i+p,i}a_{i,j-p})
            &= \psi(-a_{i+p,j-p}+a_{i+p,i}a_{i,j-p})\\
            &= (-a_{q_i+1,q_i}+a_{q_i+1,q_i})\otimes a_{r_i,r_j}\\
            &=0,
\end{align*}

\noindent which equals $(\phi_{\s_n}\otimes\id)\psi(a_{ij})$. If $j-p>i$ then 
  \[\psi(-a_{i+p,j-p}-a_{i+p,i}a_{i,j-p}) = -(a_{q_i+1,q_i}\otimes 1)(1\otimes a_{r_i,r_j}) = (\phi_{\s_n}\otimes\id)\psi(a_{ij}).\]
\noindent When $j-p=i$ then $r_i=r_j$ and $\psi(-a_{i+p,j-p}) = -a_{n+1,n}\otimes 1=(\phi_{\s_n}\otimes\id)\psi(a_{ij})$, and this finishes the proof.
\end{proof}


\section{Comments on augmentation rank and multiplicativity}
\label{SecComments}

The section is in two parts. First we prove Theorem \ref{ThmNNPlus1}, showing some cables of torus knots have augmentation rank less than bridge number. In the second part we discuss how this result, and some computational evidence, might fit into Conjecture \ref{ConjSuperMultipl}.

\subsection{Cables of $(n,n+1)$ torus knots}
\label{SecNNPlus1}

\newtheorem*{ThmNNPlus1}{Theorem \ref{ThmNNPlus1}}
\begin{ThmNNPlus1}Given $p>1$ and $n>1$, we have \[\ar(T((n,p),(n+1,1))) < np.\]
\end{ThmNNPlus1}

\begin{rem} The remarks at the end of Section \ref{SecAsSatelliteOp} imply that the bridge number of $T((n,p),(n+1,1))$ is $np$.
\end{rem}

\begin{figure}[ht]
      \begin{tikzpicture}[scale=.5,>=stealth]
        \foreach \c in {-6,5.2} \draw (\c,1) circle (60pt);
          \foreach \p in {-1.35,-.55} \draw (-6+\p,1) -- (-5.5+\p,1);
          \foreach \p in {-1.35,-.55} \draw (5.2+\p,1) -- (5.7+\p,1);
        \foreach \c in {-6,5.2}  
          \draw   (\c+2.1,1) node {{\large $\ast$}}
                  (\c+0.7,1) node {{\footnotesize $\ldots$}};
        \draw[->] (-6.3,1) ..controls (-6.3,1.7) and (-6.75,0.75) ..(-7.25,0.75)
                                  ..controls (-7.6,0.75) and (-7.6,0.9) ..(-7.6,1)
                                  ..controls (-7.6,2.5) and (-4.5,2) ..(-3.9,1);
        \draw[->] (4.9,1)  ..controls (4.9,1.7) and (4.1,0.3) ..(4.1,1);
        \draw     (-6.3,1) node[below] {{\tiny $s$}};
        \draw     (4.9,1) node[below] {{\tiny $s$}}
                  (4.1,1) node [above] {{\tiny $i$}};
        \draw (0,1) node {{\footnotesize $=\quad c_{p+s,\ast}\ +\quad$} $\displaystyle{\sum_{i=1}^p}$};
        \draw (7.4,1) node[right] {\footnotesize $c_{i,\ast}$};
      \end{tikzpicture}
      \caption{$\pp\tau\cdot c_{s\ast}$, $1\le s\le p$ as an element of $\mathscr S_{np+1}$}
      \label{FigTauInSn}
    \end{figure}

\begin{proof}
Let $\tau = \s_1\ldots\s_{n-1}\in B_n$ and set $\alpha = \tau^{n+1}$, which has the $(n,n+1)$ torus knot as its braid closure. We have $T((n,p),(n+1,1)) = K(\alpha,\gamma)$ for $\gamma=\s_1\ldots\s_{p-1}\in B_p$. Write $(\Phi_{\gamma\subsmallp(\alpha\subsmallp)}^L)_i$ for the $i^{th}$ row of $\Phi_{\gamma\subsmallp(\alpha\subsmallp)}^L$.

The structure of the proof is the following. By Theorem \ref{thm:RanknAugs} we must prove there is no homomorphism $\epsilon:\A_{np}\to\C$ such that $\epsilon(\Phi_{\gamma\subsmallp(\alpha\subsmallp)}^L)=\Delta(\gamma\smallp(\alpha\smallp))$. Note that, since $\overline\gamma$ is in the image of the inclusion $B_p\hookrightarrow B_{np}$ given by $\s_i\mapsto\s_i$, Theorem \ref{thm:ChainRule} implies that $(\Phi_{\gamma\subsmallp(\alpha\subsmallp)}^L)_i=(\Phi_{\subpp\alpha}^L)_i$ for $p<i\le np$. Hence, were such an $\epsilon$ to exist then $\epsilon((\Phi_{\subpp\alpha}^L)_i) = {\bf e}_{i}$ for $p < i \le np$. 

We will see that $(\Phi_{\overline\gamma}^L)_p={\bf e}_1$, implying that the entry $(\Phi_{\subpp\alpha}^L)_{1p}$ agrees with a diagonal entry of $\Phi_{\gamma\subsmallp(\alpha\subsmallp)}^L$. We then calculate that, for any $\epsilon$ satisfying $\epsilon((\Phi_{\subpp\alpha}^L)_i) = {\bf e}_{i}$ for $p < i \le np$, we must have $\epsilon((\Phi_{\subpp\alpha}^L)_{1p})=0$. This contradicts $\epsilon(\Phi_{\gamma\subsmallp(\alpha\subsmallp)}^L) = \Delta(\gamma\smallp(\alpha\smallp))$ and proves the result.

In consideration of the above, for the remainder of the proof $\epsilon:\A_{np}\to\C$ denotes a homomorphism with the property $\epsilon((\Phi_{\subpp\alpha}^L)_i) = {\bf e}_{i}$ for $p < i \le np$.

To prove that $\epsilon((\Phi_{\subpp\alpha}^L)_{1p})=0$ we first demonstrate, in {\bf I} below, that $\epsilon((\Phi_{\subpp\alpha}^L)_{1p})=-\epsilon(a_{p+1,p})$. This is followed in {\bf II} by a proof that $\epsilon(a_{p+1,p})=0$, which completes the proof of the theorem (the equality $(\Phi_{\overline\gamma}^L)_p={\bf e}_1$ is derived in the process).

{\bf I.} For $z\in\Z$, consider matrices $\Phi_{\subpp\tau^z}^L$ and partition them into an $n\times n$ array of $p\times p$ submatrices. In notation, define for $1\le i,j\le n$ the $p\times p$ matrix $\Psi_{ij}^z$ so that
    \[ \Phi_{\subpp\tau^z}^L = \begin{pmatrix}\Psi_{11}^z & \cdots & \Psi_{1n}^z \\ & \ddots & \\ \Psi_{n1}^z & \cdots & \Psi_{nn}^z\end{pmatrix}.\]

We claim that 
  \begin{enumerate}
    \item[(a)] the $(n-1)p\times(n-1)p$ submatrix $(\Psi_{ij}^1)_{i<n,j>1}$ is the identity matrix;
    \item[(b)] $\Psi_{n1}^1$ is the $p\times p$ identity matrix;
    \item[(c)] $\Psi_{nj}^1$ is the zero matrix for $j>1$;
    \item[(d)] finally, $(\Phi_{\overline\gamma}^L)_p = (1,0,\ldots,0)$.
  \end{enumerate}
Verification of the claim is left to the reader. As an example, (a) requires identities in $\mathscr S_{np+1}$ (which are passed through to $\A_{np+1}$ by $\chi$) similar to the identity in Figure \ref{FigTauInSn}, which can be used to calculate $\Psi_{1j}^1$ for $1\le j\le n$. Also, (d) can be deduced from (b) and (c) in the case that the $\Psi_{ij}^1$ are of size $1\times 1$.

By Theorem \ref{thm:ChainRule} we have $\Phi_{\subpp\tau^{z+1}}^L = \phi_{\subpp\tau}(\Phi_{\subpp\tau^{z}}^L)\Phi_{\subpp\tau}^L$. Thus by (a) and (c) above, for $1 \le j < n$,
  \begin{equation}
    \Psi_{i,j+1}^{z+1} = \phi_{\subpp\tau}(\Psi_{ij}^z).
    \label{eqn1}
  \end{equation}
Theorem \ref{thm:ChainRule} also shows $\Phi_{\subpp\tau^{z+1}}^L = \phi_{\subpp\tau^z}(\Phi_{\subpp\tau}^L)\Phi_{\subpp\tau^z}^L$. Hence by (b), (c)
  \begin{equation}
    \begin{aligned}
      \Psi_{nj}^{z+1}  &= \Psi_{1j}^z
    \end{aligned}
    \label{eqn2}
  \end{equation}
  
for all $1\le j\le n$, and, for $1\le i < n$, we have by (a) that
  \al{
    \Psi_{ij}^{z+1} &= \Psi_{i+1,j}^z + \phi_{\subpp\tau^z}(\Psi_{i1}^1)\Psi_{1j}^z.
  }

Taking $z=n$ above, equations (\ref{eqn1}) and (\ref{eqn2}) thus imply
  \al{
    \Psi_{ij}^{n+1} &= \phi_{\subpp\tau}^{-1}(\Psi_{i+1,j+1}^{n+1}) + \phi_{\subpp\tau^n}(\Psi_{i1}^1)\Psi_{nj}^{n+1}.
  }
  
  \begin{figure}[ht]
      \begin{tikzpicture}[scale=.6,>=stealth]
        \foreach \c in {-2} \draw (\c,1) circle (60pt);
        \foreach \c in {-2}
          \foreach \p in {-1.15,1.25} \filldraw (\c+\p,1) circle (1.5pt);
        \foreach \c in {-2}  
          \draw   (\c+2.1,1) node {{\large $\ast$}}
                  (\c+0.6,2.75) node {{\footnotesize $D$}};
        \foreach \c in {-2}
          \draw   (\c+.52,.9) node {{\small $\ldots$}};  
        \draw[->,thick] (-.75,1) ..controls (-.75,.3) and (-3.15,.3) ..(-3.15,1);
        \draw   (-.75,1) node[above] {{\tiny $s$}}
                (-3.15,1) node[above] {{\tiny $t$}};
        \draw   (-2,1-2.1) node[below] {\small $\chi^{-1}(w_{st})$};
      \end{tikzpicture}
      \caption{The spanning arc for $w_{st}$}
      \label{FigSentToZero2}
    \end{figure}
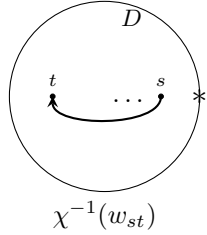

Hence $\epsilon(\Psi_{ij}^{n+1})=\epsilon(\phi_{\subpp\tau}^{-1}(\Psi_{i+1,j+1}^{n+1}))$ for $1\le j < n$, since $\epsilon(\Psi_{nj}^{n+1})={\bf 0}$ by assumption. Utilizing (\ref{eqn1}) and (\ref{eqn2}) again we find that, for $i\ge j$,
    \[\epsilon(\Psi_{ij}^{n+1}) = \epsilon(\phi_{\subpp\tau}^{-n+i}(\Psi_{n,j+(n-i)}^{n+1})) = \epsilon(\Psi_{nj}^{i+1}) = \epsilon(\Psi_{1j}^i).\]
Taking $s=1$ in Figure \ref{FigTauInSn}, we see that the $(1,p)$-entry of $\Psi_{11}^1$ is $\chi(c_{p+1,p})=-a_{p+1,p}$. And so $\epsilon((\Phi_{\subpp\alpha}^L)_{1p}) = \epsilon((\Psi^{n+1}_{11})_{1p}) = -\epsilon(a_{p+1,p})$, which we were to show in {\bf I}.

{\bf II.} We must show that $\epsilon(a_{p+1,p})=0$. To do so we consider $\phi^\ast_{\subpp\alpha}(a_{(n-1)p+1,\ast})$ in $\A_{np}^L\subset\A_{np+1}$ which, similar to above, we understand through the corresponding spanning arc (see Figure \ref{FigPhiLCalcPhi}). Our assumption that $\epsilon((\Phi_{\subpp\alpha}^L)_{(n-1)p+1})={\bf e}_{(n-1)p+1}$ means that if we write $\phi^\ast_{\subpp\alpha}(a_{(n-1)p+1,\ast})$ in the basis $\{a_{1,\ast},\ldots,a_{np,\ast}\}$ of $\A_{np}^L$ then $\epsilon$ sends all but $(n-1)p+1$ coefficient to zero. 

For $p< r \le np$, define $v_r\in\A_{np}$ such that $\chi^{-1}(v_r)$ is the spanning arc shown on the right in Figure \ref{FigPhiLCalcPhi}, which ends at $r$. Define $w_{st}$ so that (as shown in Figure \ref{FigSentToZero2}) $\chi^{-1}(w_{st})$ is contained in the lower half of $D_{np}$, and begins at $s$ and ends at $t$. If $s=t$ then we define $w_{st}=1$.

In {\bf I} we showed $\epsilon(\Phi_{ij}^{n+1}) = \epsilon(\Psi_{1j}^i)$ for any $i\ge j$. This has an important consequence for elements of the form $w_{ip+1,j}$. The entries of $\Psi_{1j}^i$ are computed from $\pp\tau^i\cdot c_{s,\ast}$ where $1\le s\le p$ (Figure \ref{FigTauInSn} shows the case $i=1$). Take $s=1$. Let $1\le i\le n-1$ and $1\le j=(q-1)p+r\le ip$ (for some $1\le r\le p$). Note this makes $i\ge q$. Then the $(1,r)$-entry of $\Psi_{1q}^i$ is $w_{ip+1,j}$. Our assumption on $\epsilon$ implies, only for $1<i\le n-1$, that
    \begin{equation}
    \epsilon(w_{ip+1,j})=\epsilon((\Psi_{iq}^{n+1})_{1r}) = \delta_{iq}\delta_{1r} = \delta_{(i-1)p+1,j},
    \label{EqnMod1Ws}
    \end{equation}
where $\delta$ is the Kronecker-delta. 

    \begin{figure}[ht]
      \begin{tikzpicture}[scale=.75,>=stealth]
        \foreach \c in {3,10} \draw (\c,1) circle (60pt);
          \foreach \p in {-1.35,-.75,-.15,1.25} \draw (3+\p,1) -- (3+\p+.2,1);
          \foreach \p in {-1.35,-.75,-.15} \draw (10+\p,1) -- (10.2+\p,1);
        \foreach \c in {3,10}  
          \draw   (\c+2.1,1) node {{\large $\ast$}}
                  (\c+0.6,2.75) node {$D$};
        \foreach \c in {3,10}
          \draw (\c+0.7,.9) node {{\small $\ldots$}};
          \filldraw (11.35,1) circle (1.5pt);
        \draw[->,thick] 
            (1.75,1)  ..controls (1.75,.85) and (2,.85) .. (2,1)
                      ..controls (2,1.8) and (4.6,1.8) .. (4.6,1)
                      ..controls (4.6,-.1) and (1.25,-.1) ..(1.25,1)
                      ..controls (1.25,2.3) and (4.8,2.25) ..(5.05,1.05);
        \draw   (1.75,.9) node[above]{{\tiny 1}};
        \draw[->,thick] 
            (8.75,1)  ..controls (8.75,.85) and (9,.85) .. (9,1)
                      ..controls (9,1.8) and (11.35,1.8) .. (11.35,1);
        \draw  (8.75,.9) node[above] {{\tiny 1}};
        \draw (3,-1.25) node[below] {{\small $\chi^{-1}(\phi^\ast_{\subpp\alpha}(a_{(n-1)p+1}))$}};
        \draw (10,-1.25) node[below] {{\small $\chi^{-1}(v_r)$}};
        \draw (11.35,1.1) node[below] {{\tiny $r$}};
      \end{tikzpicture}
      \caption{Row $(n-1)p+1$ of $\Phi_{\subpp\alpha}^L$}
      \label{FigPhiLCalcPhi}
    \end{figure}
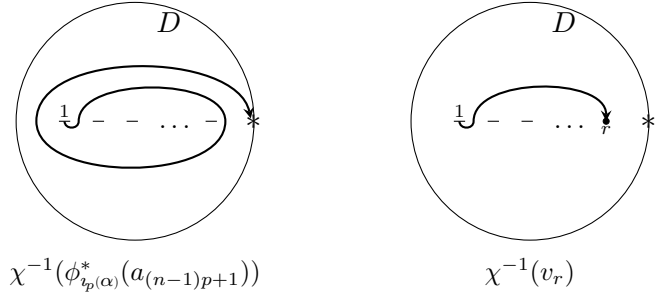

For $p<j\le np$, the coefficient of $a_{j,\ast}$ in $\phi^\ast_{\subpp\alpha}(a_{(n-1)p+1,\ast})$ can be written as
      \begin{equation}x_j := \langle \phi^{\ast}_{\subpp\alpha}(a_{(n-1)p+1,\ast}), a_{j,\ast} \rangle = \sum_{r=j}^{np}v_rw_{rj}.
      \label{eqnCurveExpansion}
      \end{equation}

{\bf Claim:} For $p < j \le np$, if $j=(n-i)p+1$ then $\epsilon(v_j)=(-1)^{i+1}$ and $\epsilon(v_j)=0$ otherwise.
\begin{proof}[Proof of Claim] The proof uses induction on $i$, proving the statement for each $(n-i)p+1\le j\le (n-i+1)p$. 

For $i=1$, by assumption $\epsilon(x_j)=0$ for $(n-1)p+1<j\le np$. Note that $x_{np} = v_{np}$. Thus $\epsilon(v_{np})=\epsilon(x_{np}) = 0$. This, applied to $j=np-1$, then $j=np-2$, and so on, implies that $\epsilon(v_j) = \epsilon(x_j) = 0$ for $(n-1)p+1<j\le np$. Furthermore, we get that $\epsilon(v_{(n-1)p+1}) = \epsilon(x_{(n-1)p+1}) = 1$.

Now suppose for some $1<i\le n-1$ that $(n-i)p+1\le j \le (n-i+1)p$. Assuming the claim holds for $v_{j'}$ with $j<j'$ we have
      \[0 = \epsilon(x_j) = \sum_{r=j}^{np}\epsilon(v_r)\epsilon(w_{rj}) = \epsilon(v_j) + \sum_{k=1}^{i-1}(-1)^{k+1}\epsilon(w_{(n-k)p+1,j}).\]
Recalling (\ref{EqnMod1Ws}), $\epsilon(w_{(n-k)p+1,j})=\delta_{(n-k-1)p+1,j}$ (provided $n-k>1$), and thus $\epsilon(v_j) = 0$ provided $j\ne (n-i)p+1$. When $j=(n-i)p+1$ we get that 
      \[0 = \epsilon(v_j) + (-1)^{i}\epsilon(w_{(n-i+1)p+1,(n-i)p+1}) = \epsilon(v_j) + (-1)^{i}\]
\noindent as claimed.
\end{proof}

We finish the proof by considering $\langle \phi^{\ast}_{\subpp\alpha}(a_{(n-1)p+1,\ast}), a_{p,\ast} \rangle$; the spanning arc corresponding to $\phi^{\ast}_{\subpp\alpha}(a_{(n-1)p+1,\ast})$ indicates a small difference to the previous coefficients. We have
      \[x_p := \langle \phi^{\ast}_{\subpp\alpha}(a_{(n-1)p+1,\ast}), a_{p,\ast} \rangle = \sum_{r=p+1}^{np}v_rw_{rp}.\]
Applying our previous claim, (\ref{EqnMod1Ws}), and $w_{p+1,p}=-a_{p+1,p}$ we have
        \begin{align*}
        0 = \epsilon(x_p)   = \sum_{i=1}^{n-1}(-1)^{n-i+1}\epsilon(w_{ip+1,p})  &= (-1)^n\epsilon(w_{p+1,p})\\
                                                                                &= (-1)^{n-1}\epsilon(a_{p+1,p}).
        \end{align*}
This implies the desired result and finishes the proof of the theorem. 
\end{proof}

\subsection{Augmentation rank does not multiply}
\label{SecMultiplic}

As discussed in Section \ref{SecAsSatelliteOp} the braid satellite $K(\alpha,\gamma)$ depends only on $\gamma$ and the closure $\hat{\alpha}$, if $\alpha$ has minimal braid index. Letting $\omega$ denote the writhe of such a braid, we write $P$ for the closure $\widehat{\Delta^\omega\gamma}$, as in Section \ref{SecAsSatelliteOp}.

\begin{prop}For any braid $\alpha$ with $K=\hat{\alpha}$ and $\gamma\in B_p$, there are $p$ KCH irreps $\s:\pi_{K(\alpha,\gamma)}\to\text{GL}_r\C$ for each KCH irrep $\rho:\pi_K\to\text{GL}_r\C$. In particular, $\ar(K(\alpha,\gamma)) \ge \ar(K)$.
\label{ThmCompanionRank}
\end{prop}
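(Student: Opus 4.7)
The plan is to use the JSJ-like decomposition of the exterior of $K(\alpha,\gamma)$ along the companion torus $T=\partial N(K)$. Since $K(\alpha,\gamma)$ sits inside $N(K)$ as the closure (in the solid torus) of a $p$-strand braid pattern $\beta_P\in B_p$, it is a satellite of $K$ of winding number $p$. Setting $X = N(K)\setminus \inter N(K(\alpha,\gamma))$, Van Kampen gives
$$\pi_{K(\alpha,\gamma)} \;\cong\; \pi_K \;*_{\pi_1(T)}\; \pi_1(X).$$
I will take $\sigma|_{\pi_K}=\rho$ and construct a compatible representation $\tau\colon\pi_1(X)\to\GL_r\C$ that sends each meridian of $K(\alpha,\gamma)$ to a ``$p$-th root'' of $\rho(m_K)$. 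There are $p$ such roots, producing the $p$ desired representations.

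Write $M=\rho(m_K)$ and $L=\rho(\ell_K)$. Since $\rho$ is a KCH irrep, $M$ is diagonalizable with eigenvalue $1$ of multiplicity $r-1$ and a single other eigenvalue $\mu$; peripheral abelianness of $\pi_K$ forces $LM=ML$, so $L$ preserves the eigenspaces of $M$. For each $p$-th root $\nu$ of $\mu$, let $N$ be the linear map fixing the $1$-eigenspace of $M$ pointwise and acting by $\nu$ on the $\mu$-eigenspace. Then $N^p=M$, $N$ has the KCH eigenvalue pattern, and $LN=NL$ (since $L$ is block-diagonal in the same eigenbasis).

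The exterior $X$ is the mapping torus of the braid automorphism of the $p$-punctured disk induced by $\beta_P$, so
$$\pi_1(X) \;=\; \langle\, z_1,\ldots,z_p,\ell \;\mid\; \ell^{-1} z_i \ell = \beta_P\cdot z_i,\ i=1,\ldots,p\,\rangle,$$
where the $z_i$ are meridians of $K(\alpha,\gamma)$, $\ell=\ell_K$, and $m_K = z_1 z_2\cdots z_p$. Define $\tau(z_i)=N$ for all $i$ and $\tau(\ell)=L$. Because the Artin action sends each $z_i$ to a conjugate of some $z_j$ by a word in the $z_k$'s, the image $\tau(\beta_P\cdot z_i)$ is a conjugate of $N$ by a power of $N$, hence equal to $N$; the defining relation then becomes $L^{-1}NL = N$, which holds by $LN=NL$. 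So $\tau$ is a well-defined homomorphism, and $\tau(m_K)=N^p=M$, $\tau(\ell_K)=L$ match $\rho$ on $\pi_1(T)$.

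By Van Kampen, $\rho$ and $\tau$ glue to a homomorphism $\sigma\colon\pi_{K(\alpha,\gamma)}\to\GL_r\C$. Every meridian of $K(\alpha,\gamma)$ is conjugate in $\pi_{K(\alpha,\gamma)}$ to some $z_i$, so its image under $\sigma$ is conjugate to $N$ and hence has the KCH eigenvalue pattern; irreducibility of $\sigma$ follows from $\sigma|_{\pi_K}=\rho$ being irreducible. The $p$ choices of $\nu$ give $p$ pairwise non-conjugate KCH irreps (distinguished by the common nontrivial meridional eigenvalue $\nu$). Specializing $\rho$ to realize $\ar(K)$ yields $\ar(K(\alpha,\gamma))\ge\ar(K)$.

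The main obstacle is verifying that $\tau$ is well-defined; once the mapping-torus presentation of $\pi_1(X)$ is in hand, everything collapses to the commutativity $LN=NL$, which follows from peripheral abelianness together with the choice of $N$ sharing eigenspaces with $M$.
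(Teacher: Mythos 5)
Your construction is correct, and its core is the same as the paper's: both proofs send every meridian of $K(\alpha,\gamma)$ lying over a fixed meridian of $K$ to a common $p$-th root of the corresponding matrix of $\rho$, observe that conjugation by words in these (mutually commuting, per fiber) matrices is invisible, and obtain $p$ irreps from the $p$ choices of root. Where you differ is in how well-definedness is verified. The paper works with a Wirtinger-type presentation of all of $\pi_{K(\alpha,\gamma)}$ adapted to the braid pattern, whose relations are asserted to have the form $x m_{ij}x^{-1}=w_i m_{1k}w_i^{-1}$, and checks these directly; you instead exploit the splitting $\pi_{K(\alpha,\gamma)}\cong\pi_K\ast_{\pi_1(T)}\pi_1(X)$ more fully, defining the representation only on the pattern piece via the mapping-torus presentation of the braid-closure complement in the solid torus and then gluing by Van Kampen. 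Your route makes the relation check transparent (everything reduces to the Artin automorphism sending free generators to conjugates of generators, plus $LN=NL$), at the cost of a little peripheral bookkeeping: the curve $\ell$ in your mapping-torus presentation is the longitude of $N(K)$ in whatever framing presents the pattern as a closed braid, which may differ from the Seifert longitude $\ell_K$ by a power of $m_K$; since the image under $\rho$ of any peripheral curve commutes with $M$ and hence with $N$, the argument is unaffected, but $L$ should be defined as $\rho$ of that particular curve. The only degenerate case is $\rho(m_1)=I$, where the ``$\mu$-eigenspace'' is not defined; irreducibility then forces $r=1$, so nothing is lost.
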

\begin{proof}Consider a neighborhood $n(K)$ of $K$ that contains $K(\alpha,\gamma)$. Write $T=\partial(n(K))$. Choose the basepoint of $\pi_{K(\alpha,\gamma)}$ on $T$. Then inclusion makes $\pi_1(T)$ a subgroup and $\pi_{K(\alpha,\gamma)}$ is isomorphic to the product of $\pi_K$ and $\pi_P$ amalgamated along $\pi_1(T)$.

Let $m_1$ be the meridian of $K$ determined by a based loop contained in $T$ that is contractible in $n(K)$. Suppose that $\rho:\pi_K\to\text{GL}_r\mathbb C$ is an irreducible KCH representation with $\widetilde M = \rho(m_1) = \text{diag}[\widetilde\mu_0,1,\ldots,1]$ for some $\widetilde\mu_0\in\mathbb C\setminus\{0\}$. Choose any $p^{th}$ root $\mu_0$ of $\widetilde\mu_0$. 

Consider a collection of meridians $m_1,\ldots,m_r$ of $K$ that generate $\pi_K$. For each $1\le i\le r$ there are $p$ meridians $m_{i1},\ldots,m_{ip}$ of $K(\alpha,\gamma)$ such that $m_{i1}m_{i2}\ldots m_{ip} = m_i$. Set $\s(m_{1j}) = \text{diag}[\mu_0,1,\ldots,1] = M$ for $1\le j\le p$. Then, for each $1< i\le r$ find $w_i\in\pi_K$ so that $m_i = w_im_1w_i^{-1}$ and set $\s(m_{ij}) = \rho(w_i)M\rho(w_i)^{-1}$ for $1\le j\le p$.

Due to the braid pattern of $K(\alpha,\gamma)$, $\pi_{K(\alpha,\gamma)}$ has a presentation so that each relation has the form $xm_{i,j}x^{-1} = w_im_{1,k}w_i^{-1}$, where $x$ is a word in $\{m_{i1}^{\pm},\ldots,m_{ip}^{\pm}\}$ and $1\le j, k\le p$, $1\le i\le r$. Thus $\s:\pi_{K(\alpha,\gamma)}\to\text{GL}_r\mathbb C$ is a well-defined KCH representation. Moreover, the image of $\s$ contains that of $\rho$, implying it is irreducible and that $\ar(K(\alpha,\gamma))\ge \ar(K)$.
\end{proof}

We remark that $\ar(K(\alpha,\gamma))\ge \ar(P)$ also, for $P=\widehat{\Delta^{2\omega}\gamma}$. This follows from Proposition \ref{PropAsSatelliteOp} and the existence of a surjection $\pi_{K(\alpha,\gamma)}\to\pi_P$, preserving peripheral structures (see Proposition 3.4 in \cite{SW}, for example). 

Oddly, the product $\ar(K)\ar(P)$ does not relate well to $\ar(K(\alpha,\gamma))$: from Theorem \ref{ThmNNPlus1} we find examples where $\ar(K(\alpha,\gamma))<\ar(K)\ar(P)$ and from Theorem \ref{main} there are examples with $\ar(K(\alpha,\gamma))>\ar(K)\ar(P)$ (take $\alpha=\s_1^3$ and $\gamma=\s_1^{-5}$, for example). However, to our knowledge the statement of Conjecture \ref{ConjSuperMultipl} could hold. 

There are cases where $\ar(K(\alpha,\gamma))$ is strictly larger than $\ar(K)\ar(\hat{\gamma})$. One example can be found from the $(2,11)$-cable of the $(2,5)$ torus knot. From computer-aided computations, we have a solution to (\ref{eqn:FindingAugs}) for $\alpha = \s_1^5\in B_2$ and $\gamma=\s_1\in B_2$, showing that $\ar(K(\s_1^5,\s_1)) = 4$, even though $\ar(\hat{\s_1^5})=2$ and $\ar(\hat{\s_1})=1$. Unfortunately, other examples of cables of torus knots (not covered by Theorems \ref{main} and \ref{ThmNNPlus1}) seem outside our computational abilities.

We end with computational observations and a question. By the inequalities in (\ref{cor:DimBound}) if a knot has bridge number less than its minimal braid index $n$, it cannot have augmentation rank equal to $n$. Take a minimal index braid representative of such a knot, and multiply that braid by successively higher powers of $\Delta^2\in B_n$, testing in each instance if the closure has augmentation rank equal to $n$. In examples, the power of $\Delta^2$ need not be very high, compared to $n$, before a braid with augmentation rank $n$ appears. Also, once such an augmentation appears, it seems to persist. 

Dehornoy introduced a total, left-invariant order on $B_n$. By Theorem \ref{ThmNNPlus1} the closure of $\s_1\smallp(\s_1^3\smallp)$ has augmentation rank less than 4. In comparison, $\s_1\smallp(\s_1^5\smallp)$ is larger in Dehornoy's order on $B_4$ and, as mentioned, has augmentation rank 4. 

The relation in the order of a braid to powers of a full twist has been shown to carry significance for the braid closure. In fact, it was shown in \cite{MN} that there is a constant $m_n$ such that if $\alpha>\Delta^{2m_n}$ (or $\alpha^{-1}>\Delta^{2m_n}$) then $\alpha$ does not admit one of the Birman-Menasco templates, and thus is a minimal index representative of $K=\hat\alpha$ by the MTWS \cite{BM_MTWS}. Perhaps there is a similar result for augmentation rank.

\newtheorem*{ques}{Question}
\begin{ques}For a given braid index $n$, is there a number $m_n$ so that $\ar(\hat\alpha)=n$ for any $\alpha\in B_n$ (with connected closure) greater than $\Delta^{2m_n}$ in Dehornoy's order?
\end{ques}

\bibliography{AugsCables_refs}
\bibliographystyle{alpha}
\end{document}